\definecolor{myblue}{cmyk}{0.5, 0.1, 0.1, 0.1}
\numberwithin{equation}{section}
\newtheorem{theorem}{Theorem}[section]
\newtheorem{lemma}[theorem]{Lemma}
\newtheorem{proposition}[theorem]{Proposition}
\newtheorem{corollary}[theorem]{Corollary}
\theoremstyle{definition}
\newtheorem{definition}[theorem]{Definition}
\newtheorem{example}[theorem]{Example}
\theoremstyle{remark}
\newtheorem{remark}[theorem]{Remark}
\newcommand*\diff{\mathop{}\!\mathrm{d}}
\newcommand{\bR}{\mathbb{R}}
\newcommand{\cE}{\mathcal{E}}
\newcommand{\EE}{\mathbb{E}}
\newcommand{\PP}{\mathbb{P}}
\newcommand{\cF}{\mathcal{F}}
\newcommand{\NN}{\mathbb{N}}
\newcommand{\RR}{\mathbb{R}}
\newcommand{\FF}{\mathbb{F}}
\newcommand{\one}{\mathds{1}}
\newcommand{\di}{\mathrm{d}}
\newcommand{\re}{\mathrm{e}}
\newcommand{\ri}{\mathrm{i}}
\newcommand\normal{\color{black}}
\newcommand{\AL}{\color{black}}
\newcommand\HeHe{\color{black}}
\begin{document}
	\author{Anita Behme\thanks{Technische Universit\"at Dresden,
			Institut f\"ur Mathematische Stochastik, 01062 Dresden, Germany, e-mail: anita.behme@tu-dresden.de, henriette.heinrich@tu-dresden.de}$\,$, Henriette E. Heinrich$^\ast$, and Alexander Lindner\thanks{Universit\"at Ulm, Institut f\"ur Finanzmathematik, 89081 Ulm, Germany, e-mail: alexander.lindner@uni-ulm.de} }
	\title{Duals and inverse flows of generalized Ornstein-Uhlenbeck processes}
	\maketitle
	
	\begin{abstract}
	We derive explicit representations for the (Siegmund) dual and the inverse  flow of generalized Ornstein-Uhlenbeck processes whenever these exist.
	It turns out that the dual and the process corresponding to the inverse stochastic flow are again generalized Ornstein-Uhlenbeck processes. Further, we observe that the stationary distribution of the dual process provides information about the hitting time of zero of the original process.
	\end{abstract}
	
	\noindent
	{\em AMS 2020 Subject Classifications:} 
	60G51, 60G10, 60H10\,\,\,
	
	\noindent
	{\em Keywords:} generalized Ornstein-Uhlenbeck process, hitting probability, ruin probability, Sieg\-mund duality, time-reversal, inverse flow
	

	\section{Introduction}

\AL Half a century ago Siegmund \cite[Thm. 1]{Siegmund1976} characterized when a  $[0,\infty]$-valued Markov process has a  so-called \emph{dual} $[0,\infty]$-valued Markov process. \HeHe According to his results, under mild regularity assumptions, the relevant conditions on the Markov process are stochastic monotonicity and right-continuity of the associated tail function in the starting value. \AL In this paper, we shall not work with $[0,\infty]$-valued Markov processes but with generalized Ornstein--Uhlenbeck processes, which are particular $\bR$-valued time-homogeneous Markov processes.
In analogy to Siegmund's original definition,
given an  $\RR$-valued  (universal) time-homogeneous  Markov process $(X_t^x)_{t\geq 0}$ (where $X_0^x=x\in\RR$ denotes the starting value),
another  $\RR$-valued  (universal) time-homogeneous Markov process $(Y_t^y)_{t\geq 0}$ (with starting value  $Y_0^y = y\in\RR$) will be called \emph{(Siegmund) dual} to $(X_t^x)_{t\geq 0}$, if
\begin{equation}\label{dual}
	\PP(X_t^x\geq y) = \PP(Y_t^y\leq x), \quad  0\leq t<\infty, \; x,y\in \RR,
\end{equation} i.e.
$\PP(X_t \geq y|X_0=x) = \PP(Y_t \leq x|Y_0=y)$.
\normal


	Note that, similar to the notation in \cite[Def. 7.1.1 f.]{OksendalSDE2003} or \cite[Def. 1.21]{Oksendal2007}, in the following and as above, we denote by $(X_t^x)_{t \ge 0}$ the time-homogeneous universal Markov process $(X_t)_{t \ge 0}$ started in $x$, i.e. $\PP(X_t^x \in B) = \PP_x(X_t \in B) = \PP(X_t \in B |X_0 = x)$ for all $B \in \mathcal{B}(\bR), t \ge 0$. \AL Also note that the definition of Siegmund duality in \eqref{dual} is a special case of more general dualities of Markov processes; see \cite{Jansen2014} for an overview.\normal
	
The duality relation \eqref{dual} is widely known and commonly used in applied probability, as it provides a powerful tool to link hitting probabilities and asymptotic behavior of dual processes.	 To illustrate this, let $(Y_t^y)_{t\geq 0}$ be dual to $(X_t^x)_{t\geq 0}$ and consider $\tau(y):=\inf\{t \ge 0 \colon Y_t^y \le 0\}$
and assume that $(Y_t^y)_{t\geq 0}$ is \AL absorbed when it first becomes non-positive. \normal
Then, under suitable regularity conditions,
	\begin{equation*}
		\PP(\tau(y)<\infty) = \lim_{t\to \infty} \PP(Y_t^y \leq 0) \overset{\eqref{dual}}{=}  \lim_{t\to \infty}\PP(X_t^0\geq y ) = \PP(X\geq y) \quad \AL \forall\; y \geq 0,\normal
	\end{equation*}
	where $X$ is a random variable whose law is given by the limiting distribution of $(X_t^0)_{t\ge0}$, provided it exists. 

This relationship has been used to determine entrance and exit laws or behaviour at reflecting barriers in the context of birth-death chains, see e.g. \cite{CoxRoesler1984, Dette1997, DiaconisFill1990, Huillet2010, HuilletMartinez2011}, in actuarial mathematics and mathematical finance  \cite{Asmussen95, AsmussenAlbrecher, AsmussenSigman1996, Goffard2019, Kolokoltsov2015}, as well as in game theory \cite{Lorek2017, Lorek2018} and population genetics \cite{Cordero2025, Foucart2019, FoucartVidmar2024, KuklaMoehle2018, Moehle2018}, just to mention a few. In the setting of interacting particle systems Siegmund duality has been exhaustively studied, see e.g. \cite{Liggett} for a textbook reference.
An analysis of Siegmund duality for multidimensional processes was carried out in \cite{BlaszczyszynSigman1999, KolokoltsovLee2013}, where the first reference  discusses set-valued dual processes and the latter uses a generator approach for Siegmund duality based on Pareto ordering. A recent reference for applications of \eqref{dual} in physics is \cite{Gueneau2024}.


In this paper, we treat Siegmund duality for the \HeHe above mentioned \normal 
setting of  (time-homogeneous) Markov processes with state space~$\bR$. 
In particular, we consider solutions of the stochastic differential equation (SDE)
\begin{equation}
	\diff V_t^x = V_{t-}^x \, \diff U_t + \diff L_t, \quad V_0^x = x, \label{eq:GOUSDE}
\end{equation}
for a general bivariate driving Lévy process $(U_t,L_t)_{t\geq 0}$. Assuming that $U$ does not admit jumps of size $-1$, these solutions are known as \emph{generalized Ornstein-Uhlenbeck (GOU) processes}  and they can be expressed explicitly; see \cite[Thm. 2.1]{BLM2011}. We use this fact to prove in Section~\ref{S-dual}\HeHe, Theorem~\ref{thm:dualGOU}, \normal that the dual of a GOU process $(V_t^x)_{t\geq 0}$, if existent, is again a GOU process, namely a process $(R_t^y)_{t\geq 0}$ solving an SDE of the form \eqref{eq:GOUSDE}. We furthermore discuss the relation between hitting probabilities and stationary distributions of the dual pair of GOU processes. As it turns out, \HeHe see Corollary~\ref{c-dual-stationary} below, \normal the appearing (causal) stationary distribution of the dual GOU process coincides with the non-causal stationary distribution of the original GOU process. When $L$ is a subordinator \AL and $U$ only has jumps of size $>-1$, \normal then $(V_t^x)_{t\geq 0}$ can also be viewed as a $[0,\infty)$-valued Markov process when only $x\geq 0$ is allowed, while the derived dual GOU process $(R_t^y)_{t\geq 0}$ \emph{on all of $\bR$} is driven by the negative of a subordinator, hence becomes also negative, even when $y\geq 0$. Hence, the Siegmund dual process $(\widehat{R}_t^y)_{t \ge 0}$ of $(V_t^x)_{t\geq 0}$, $x\geq 0$, viewed as an $[0,\infty]$-valued process \AL (i.e.~as in the original setting of Siegmund~\cite{Siegmund1976}), \normal must be different from the Siegmund dual GOU process $(R_t^y)_{t\geq 0}$, viewed as a Markov process on $\bR$ \AL (i.e. as defined above). \normal
In Proposition~\ref{prop-dualhalfline} we clarify the relation between $R$ and $\widehat{R}$.

Note that despite the fact that GOU processes are known to be Feller processes as long as $U$ has no jumps of size $-1$, see \cite{BLexpfunc, kuehn}, the approach via the dual generator provided in \cite{Kolokoltsov2011} is not fully applicable in the present setting. This is partly due to the fact that in \cite{Kolokoltsov2011}, the analysis is restricted to processes with bounded coefficients (although it is stated that this can be dropped), which does not hold for GOU processes. Moreover, the assumptions made in \cite{Kolokoltsov2011} to guarantee stochastic monotonicity and hence existence of a dual process would imply that only positive jumps of the L\'evy process $U$ are admissible. This however is not necessary as we shall see in Lemma~\ref{lem-GOUstochmon} below.

	As it is often the case for dual processes, there is an intrinsic relation between time-reversal of a Markov process (more precisely, the stochastic process induced by the inverse stochastic flow) and its dual process; see e.g. ~\cite{DiaconisFill1990, Nagasawa1964}. In case of processes with continuous paths, this connection follows easily, as the time-reversed process shifted along the vertical axis provides a version of the Siegmund dual process; see e.g. \cite{Sigman2000}. However, path continuity is not a necessary condition, as, e.g.  the duality relation of the Cram\'er-Lundberg risk process absorbed in ruin and the M/G/1-queue workload process described in Example \ref{ex:risk} below can be proved via a time-reversion argument; see \cite[Chapter III.2]{AsmussenAlbrecher}.
Motivated by this, we turn our attention to the inverse flow of GOU processes in Section \ref{S-timereverse} and prove \HeHe in Theorem~\ref{thm:time_reversed_GOU} \normal  that the process associated with the inverse stochastic flow is again a GOU process and, moreover, that it provides a version of the dual GOU process.

This study of time inverted GOU processes is very timely and of particular interest by itself, as time-reversals of OU processes are fundamental for the most prominent denoising diffusion models in image generation; see e.g. \cite{Croitoru2023}. Recent studies, such as \cite{YPKL}, outline the potential of (time inverted) jump diffusions in this context, and GOU processes are a natural candidate to pursue this approach.

	\section{Preliminaries}
	\setcounter{equation}{0}
	
	General information regarding L\'evy processes can be found e.g. in \cite{Bertoin1996, BrockwellLindner2024, Sato1999}, and for stochastic integration we refer to \cite{Protter2003}.
	
	\subsection{Generalized Ornstein-Uhlenbeck processes}\label{SPrelimGOU}

	Throughout, let $(U_t,L_t)_{t\geq 0}$ be a bivariate L\'evy process with characteristic triplet $(\gamma_{U,L}, \Sigma_{U,L}, \nu_{U,L})$ for some location parameter $\gamma_{U,L}\in \bR^2$, a non-negative definite Gaussian covariance matrix $\Sigma_{U,L}=\left(\begin{smallmatrix}
		\sigma_U^2 & \sigma_{U,L} \\
		\sigma_{U,L} & \sigma_L^2
	\end{smallmatrix} \right)\in \bR^{2\times 2}$, and a Lévy measure $\nu_{U,L}$ on $\RR^2\setminus \{0\}$.
	Then the characteristic function of $(U_t,L_t)_{t\geq 0}$ at $(x,y)^T\in \bR^2$ is given by
	\begin{align*}
		\mathbb{E} \re^{\ri (x U_t + y L_t)} = & \exp \Big\{  t \Big( \ri (x,y) \gamma_{U,L}^T - (x,y) \Sigma_{U,L} (x,y)^T /2\\
		& \qquad + \int_{\bR^2 \setminus \{0\}} (\re^{\ri (x,y) z^T} - 1 - \ri (x,y) z^T \one_{\{|z|\leq 1\}}) \, \nu_{U,L}(\diff z) \Big) \Big\},
	\end{align*}
	where the superscript $T$ denotes transposition. The marginal location parameters and the marginal jump measures of $U=(U_t)_{t\geq 0}$ and $L=(L_t)_{t\geq 0}$ are denoted by $\gamma_U$, $\gamma_L$, $\nu_U$, and $\nu_L$, respectively.
	
	For any c\`adl\`ag process $Z$ we denote by $Z_{t-}$ the left-hand limit of $Z$ at time $t\in(0,\infty)$, set $Z_{0-}:=Z_0$, and write $\Delta Z_t=Z_t-Z_{t-}$ for its jumps. Throughout, we make the assumption
	\begin{equation}
		\nu_{U,L}(\{-1\}\times \RR)=0, \text{ i.e. } \Delta U_t\neq -1 \text{ for all } t\geq 0, \label{eq:assU(A)}
	\end{equation}
	when considering the SDE \eqref{eq:GOUSDE}, as it implies \AL that the unique solution is of the form as stated in Lemma \ref{lemma:BLMProp3.2} below. \normal

\HeHe
	Recall that the stochastic exponential $(\cE(U)_t)_{t\geq 0}$ of $(U_t)_{t\geq 0}$ is the unique solution of the SDE
	$$\diff \cE(U)_t = \cE(U)_{t-} \diff  U_t ,\quad \cE(U)_0=1,$$
	which can be expressed explicitly by the Dol\'eans-Dade formula (see~\cite[Thm.~II.37]{Protter2003})  as
	\begin{equation}\label{Dolean}
		\cE(U)_t = \re^{U_t - \sigma_U^2 t/2} \prod_{0<s\leq t} (1+\Delta U_s) \re^{-\Delta U_s}, \quad t \geq 0.
	\end{equation}
	From this it follows that $\cE(U)_t \neq 0$ for all $t$ under Assumption \eqref{eq:assU(A)}, while $\cE(U)_t > 0$ for all $t$ if and only if $\Delta U_t > -1$ for all $t$. \normal
	
	\begin{lemma}[{\cite[Prop. 3.2]{BLM2011}}]\label{lemma:BLMProp3.2}
		Under Assumption \eqref{eq:assU(A)}, the solution $(V_t^x)_{t\geq 0}$ of \eqref{eq:GOUSDE}, called the \emph{generalized Ornstein-Uhlenbeck (GOU) process}, is unique and given explicitly by
		\begin{align}\label{eq:GOU-explicit}
			V_t^x = \cE(U)_t \Big( x+ \int_{(0,t]} \cE(U)_{s-}^{-1} \diff \eta _s\Big), \quad t\geq 0.
		\end{align}
Hereby, $(\cE(U)_t)_{t\geq 0}$ is the stochastic exponential of $(U_t)_{t\geq 0}$ and $\eta=(\eta_t)_{t\geq 0}$ is the L\'evy process given by
\begin{equation} \label{eq:etaviaUL}
		\eta_t =L_t - \sum_{0<s\leq t} \frac{\Delta U_s \Delta L_s}{1+ \Delta U_s} - t \sigma_{U,L}, \quad t\geq 0.
	\end{equation}
	\end{lemma}

	Observe  that if $U$ and $L$ are independent, then $\Delta U_t \Delta L_t = 0$ for any $t \ge 0$ and $\sigma_{U,L} = 0$, hence $\eta = L$ by \eqref{eq:etaviaUL}. This is particularly the case if $U_t=-\lambda t$ is chosen deterministically and we refer to the resulting GOU process
		\begin{align*}
			V_t^x = \re^{-\lambda t} \Big( x+ \int_{(0,t]} \re^{\lambda s} \diff \eta _s\Big)
		\end{align*}
		as \emph{L\'evy-driven Ornstein-Uhlenbeck process}.

If the Assumption \eqref{eq:assU(A)} is strengthened to
\begin{equation} \label{eq:assU(B)}
\nu_{U,L}((-\infty,-1] \times \RR)=0, \text{ i.e. } \Delta U_t> -1 \text{ for all } t\geq 0,
\end{equation}
then $\cE(U)_t > 0$ for all $t$ and one can define another Lévy process  $\xi=(\xi_t)_{t \ge 0}$ by $\xi_t := -\log \cE(U)_t$ for all $t \ge 0$, resulting in the more common explicit representation of a GOU process as
	\begin{align}\label{eq:GOU-explicitxi}
		V_t^x = \re^{-\xi_t} \Big( x+ \int_{(0,t]} \re^{\xi_{s-}} \diff \eta _s\Big), \quad t\geq 0.
	\end{align}
	If additionally $L$ is a subordinator, then also $\eta$ is a subordinator, since in this case $\sigma_{U,L} = 0$ and $\Delta \eta_t = \Delta L_t / (1+\Delta U_t)$. Further, due to \eqref{eq:GOU-explicitxi}, in this case the resulting GOU process started in some $x>0$ only takes values in $(0,\infty)$.\\
	Given the bivariate Lévy process $(\xi,\eta)$, the process $(U,L)$ can be recovered via
	\begin{equation}\label{eq:BLM(1.3)}
		\begin{pmatrix}
			U_t \\ L_t
		\end{pmatrix} = \begin{pmatrix}
			- \xi_t + \sum_{0<s\leq t} (\re^{-\Delta \xi_s} -1 + \Delta \xi_s) + t \sigma_{\xi}^2/2 \\
			\eta_t + \sum_{0<s\leq t} (\re^{-\Delta \xi_s} -1)\Delta \eta_s - t \sigma_{\xi,\eta}
		\end{pmatrix}, \quad t \ge 0,
	\end{equation}
	see \cite[Eq. (1.3)]{BLM2011}.\\
	
	Throughout the paper, let $\FF=(\cF_t)_{t\geq 0}$ denote the augmented natural filtration of $(U_t,L_t)_{t\geq 0}$, and note that by \cite[Thm. 3.1]{BLexpfunc} (see also \cite{kuehn}), $(V_t^x)_{t\geq 0}$ is a rich Feller process and hence a Markov process with respect to $\FF$.\\
	
	Our exposition will further rely on the following lemmata.
	\begin{lemma}[{\cite[Lemma 3.1]{BLM2011}}]\label{lemma:BLMLemma3.1}
		Suppose  \eqref{eq:assU(A)} is satisfied, then for every $t \ge 0$,
		\begin{equation*}
			\begin{pmatrix}
				\cE(U)_t \\
				\cE(U)_t \int_{(0,t]} \cE(U)_{s-}^{-1} \diff \eta_s
			\end{pmatrix} \overset{d}{=} \begin{pmatrix}
				\cE(U)_t \\
				\int_{(0,t]} \cE(U)_{s-} \diff L_s
			\end{pmatrix}.
		\end{equation*}
	\end{lemma}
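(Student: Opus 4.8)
The plan is to exploit the exchangeability of the increments of the Lévy process $(U,L)$, which is the discrete incarnation of its time-reversal symmetry. Fix $t>0$; for $n\in\NN$ put $u_i=it/n$ and $\Delta_iU=U_{u_i}-U_{u_{i-1}}$, $\Delta_iL=L_{u_i}-L_{u_{i-1}}$, so that $\big((\Delta_iU,\Delta_iL)\big)_{i=1}^n$ is an i.i.d.\ family, hence invariant in law under the reversal $i\mapsto n+1-i$. I would consider three discrete objects: the Euler--Maruyama scheme $\tilde V_k=(1+\Delta_kU)\tilde V_{k-1}+\Delta_kL$, $\tilde V_0=0$, for the (linear) SDE \eqref{eq:GOUSDE} with $x=0$, which unwinds to $\tilde V_n=\sum_{i=1}^n\big(\prod_{j=i+1}^n(1+\Delta_jU)\big)\Delta_iL$; the Euler scheme $\tilde\cE_k=\prod_{j=1}^k(1+\Delta_jU)$ for $\diff\cE(U)=\cE(U)_{t-}\diff U$; and the associated left-point Riemann sum $\tilde Z_n=\sum_{i=1}^n\big(\prod_{j=1}^{i-1}(1+\Delta_jU)\big)\Delta_iL$ approximating $\int_{(0,t]}\cE(U)_{s-}\diff L_s$.

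On the discrete level the identity is pure bookkeeping: relabelling $i\mapsto n+1-i$ and $j\mapsto n+1-j$ transforms $\tilde V_n$ into the expression defining $\tilde Z_n$, but written in terms of the reversed increments, while $\tilde\cE_n$ is unchanged by the relabelling; combining this with exchangeability gives $(\tilde\cE_n,\tilde V_n)\overset{d}{=}(\tilde\cE_n,\tilde Z_n)$ for every $n$. It then remains to let $n\to\infty$: the standard convergence of Euler schemes for SDEs driven by semimartingales gives $\tilde V_n\to V_t^0$ and (interpolating the $\tilde\cE_k$ to a step process $\cE^{(n)}$) $\cE^{(n)}\to\cE(U)$ uniformly on $[0,t]$ in probability, whence $\tilde\cE_n\to\cE(U)_t$ and $\tilde Z_n=\int_{(0,t]}\cE^{(n)}_{s-}\diff L_s\to\int_{(0,t]}\cE(U)_{s-}\diff L_s$ in probability. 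Passing to the limit in the equality in law and identifying $V_t^0=\cE(U)_t\int_{(0,t]}\cE(U)_{s-}^{-1}\diff\eta_s$ via \cite[Prop. 3.2]{BLM2011} (legitimate under \eqref{eq:assU(A)}, which also keeps $\cE(U)_{s-}$ away from $0$ so that all the integrals are well defined) yields the assertion.

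The crux is the limiting step: one has to quote carefully the convergence, uniformly on $[0,t]$ in probability, of the two Euler schemes and of the Riemann sums to the respective stochastic integrals, and to check $\prod_{j=1}^n(1+\Delta_jU)\to\cE(U)_t$ — classical facts, but somewhat delicate here because the coefficient in \eqref{eq:GOUSDE}, though linear, is unbounded and $\cE(U)$ (and $\cE(U)^{-1}$) admits no deterministic bounds. A hands-on alternative avoids discretization: introduce $(\hat U_s,\hat L_s):=(U_t-U_{(t-s)-},\,L_t-L_{(t-s)-})$, $s\in[0,t]$, which is equal in law to $(U,L)$ on $[0,t]$ and satisfies $\cE(\hat U)_t=\cE(U)_t$ and $\Delta\hat U_s=\Delta U_{t-s}$ pathwise, and then prove the pathwise identity $\cE(U)_t\int_{(0,t]}\cE(U)_{s-}^{-1}\diff\eta_s=\int_{(0,t]}\cE(\hat U)_{s-}\diff\hat L_s$; there the main difficulty shifts to tracking the jump correction term in \eqref{eq:etaviaUL} under the reversal, and one still needs an approximation to handle the Gaussian part of $L$ — so I would favour the discretization argument for its transparency.
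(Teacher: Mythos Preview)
The paper does not give its own proof of this lemma; it is quoted without argument from \cite[Lemma~3.1]{BLM2011}. Your discretization-plus-exchangeability argument is sound in outline and is essentially the approach taken in the cited source: the discrete bookkeeping giving $(\tilde\cE_n,\tilde V_n)\overset{d}{=}(\tilde\cE_n,\tilde Z_n)$ via the relabelling $i\mapsto n{+}1{-}i$ is exactly the right mechanism, and the passage to the limit goes through once one has UCP convergence of the Euler scheme for $\cE(U)$ (obtainable e.g.\ via Kurtz--Protter-type results, or more directly by expanding $\log\tilde\cE_k$ and comparing with the Dol\'eans--Dade exponent) together with stability of stochastic integrals under UCP convergence of the integrand. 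The unbounded linear coefficient is, as you note, a nuisance for quoting black-box Euler convergence statements, but localization by the stopping times $\inf\{s:|\cE(U)_s|\vee|\cE(U)_s^{-1}|>N\}$ disposes of it.

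Your alternative pathwise route via the reversed pair $(\hat U,\hat L)$ is equally viable, and a version of precisely that identity---with the quadratic-covariation correction you anticipate---is invoked later in this very paper in the proof of Theorem~\ref{thm:time_reversed_GOU}, citing \cite[Lemma~6.1]{LindnerMaller2005} and \cite[proof of Prop.~8.3]{BehmeLindner2012}. So both of your proposed routes are standard; the first matches the original \cite{BLM2011} argument, while the second is the device the present paper itself relies on elsewhere.
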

	
	\begin{lemma}[{\cite[Lemma 3.4]{BLM2011}} or {\cite[Thm. 1]{Karandikar1991}}]\label{lemma:BLMLemma3.4}
		Assume  \eqref{eq:assU(A)}  and
		let $W=(W_t)_{t\geq 0}$ be defined by
		\begin{equation*}
			W_t := -U_t + \sigma_U^2 t + \sum_{0<s\leq t} \frac{(\Delta U_s)^2}{1+ \Delta U_s}, \quad t\geq 0.
		\end{equation*}
		Then $W$ is a L\'evy process satisfying
		\begin{equation*}
			\cE(U)_t^{-1} = \cE(W)_t, \quad t \ge 0,
		\end{equation*}
		and its characteristic triplet $(\gamma_W, \sigma_W^2, \nu_W)$ is given by
		\begin{equation*}
			\gamma_W = - \gamma_U + \sigma_U^2 + \int_{\bR} \left(z \mathds{1}_{\{|z|\le 1\}} - \frac{z}{1+z} \mathds{1}_{\{z \ge -1/2\}}\right) \nu_U(\diff z),
		\end{equation*}
		 $\sigma_W^2 = \sigma_U^2$, and $\nu_W = F(\nu_U)$ (the image measure) for
		\begin{equation*}
			F \colon \bR \setminus \{-1\} \to \bR \setminus \{-1\}, \quad x \mapsto \frac{-x}{1+x}.
		\end{equation*}		
	\end{lemma}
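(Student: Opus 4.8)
The plan is to verify, in order, that $W$ is a L\'evy process, that $\cE(U)^{-1}=\cE(W)$, and that $W$ has the stated characteristic triplet; the main tool throughout is the Dol\'eans--Dade formula \eqref{Dolean}.

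\emph{Step 1 ($W$ is a L\'evy process).} First I would check that $J_t:=\sum_{0<s\le t}\frac{(\Delta U_s)^2}{1+\Delta U_s}$ converges absolutely a.s.\ for each $t$: split the jumps of $U$ on $[0,t]$ into those with $|\Delta U_s|>1/2$ (only finitely many, hence a finite contribution) and those with $|\Delta U_s|\le 1/2$ (on which $\big|\frac{(\Delta U_s)^2}{1+\Delta U_s}\big|\le 2(\Delta U_s)^2$, and $\sum_{0<s\le t}(\Delta U_s)^2<\infty$ a.s.\ by a standard property of L\'evy processes). Thus $J$ is a c\`adl\`ag finite-variation pure-jump process --- the Poisson integral $\int_{(0,t]\times\bR}\frac{z^2}{1+z}\,N_U(ds,dz)$ against the jump measure $N_U$ of $U$ --- and therefore $W_t=-U_t+\sigma_U^2 t+J_t$ has c\`adl\`ag paths, vanishes at $0$, and has stationary independent increments, since $W_t-W_s$ is a fixed measurable function of the increment $(U_{s+u}-U_s)_{0\le u\le t-s}$.

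\emph{Step 2 ($\cE(U)^{-1}=\cE(W)$).} The two facts to record are $\Delta W_s=-\Delta U_s+\frac{(\Delta U_s)^2}{1+\Delta U_s}=\frac{-\Delta U_s}{1+\Delta U_s}=F(\Delta U_s)$, so that $1+\Delta W_s=(1+\Delta U_s)^{-1}$, and $\sigma_W^2=\sigma_U^2$ (the Gaussian part of $W$ is that of $-U$). Plugging these into \eqref{Dolean} gives
\begin{align*}
\cE(W)_t &=\re^{W_t-\sigma_U^2 t/2}\prod_{0<s\le t}(1+\Delta U_s)^{-1}\re^{\Delta U_s/(1+\Delta U_s)}\\
&=\re^{-U_t+\sigma_U^2 t/2}\,\re^{J_t}\prod_{0<s\le t}(1+\Delta U_s)^{-1}\re^{\Delta U_s/(1+\Delta U_s)},
\end{align*}
and since all the series of logarithms converge absolutely (using $\log((1+x)\re^{-x})=O(x^2)$), one may write $\re^{J_t}=\prod_{0<s\le t}\re^{(\Delta U_s)^2/(1+\Delta U_s)}$ and merge it into the product; the exponents then collapse via $\frac{(\Delta U_s)^2}{1+\Delta U_s}+\frac{\Delta U_s}{1+\Delta U_s}=\Delta U_s$, yielding $\cE(W)_t=\re^{-U_t+\sigma_U^2 t/2}\prod_{0<s\le t}(1+\Delta U_s)^{-1}\re^{\Delta U_s}=\cE(U)_t^{-1}$, the inverse being well defined because \eqref{eq:assU(A)} forces $\cE(U)_t\ne0$.

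\emph{Step 3 (the triplet, and the main obstacle).} We already have $\sigma_W^2=\sigma_U^2$; and since the jumps of $W$ are the images $F(\Delta U_s)$ of those of $U$ under the bijection $F$ of $\bR\setminus\{-1\}$ (which is in fact an involution), $\nu_W=F(\nu_U)$. For $\gamma_W$ I would substitute the L\'evy--It\^o decomposition of $U$ (with truncation $z\mapsto z\one_{\{|z|\le1\}}$) into $W_t=-U_t+\sigma_U^2 t+J_t$ and regroup it into the L\'evy--It\^o decomposition of $W$, whose natural truncation $y\mapsto y\one_{\{|y|\le1\}}$ in the jump variable $y=F(z)$ corresponds, via the elementary equivalence $|F(z)|\le1\iff z\ge-1/2$, to $z\mapsto F(z)\one_{\{z\ge-1/2\}}$. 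Using the identity $\frac{z^2}{1+z}=z+F(z)$ the small-jump ($|z|\le1/2$) contributions cancel exactly, the finitely many remaining jumps are matched termwise, and the leftover deterministic drift is precisely $\int_{\bR}\big(z\one_{\{|z|\le1\}}-\frac{z}{1+z}\one_{\{z\ge-1/2\}}\big)\nu_U(dz)$; adding the $-\gamma_U+\sigma_U^2$ coming from $-U+\sigma_U^2 t$ gives the claimed $\gamma_W$. I expect this last step to be the one requiring care: it is pure bookkeeping, but one must match two different truncation functions and retain every compensator term, the two algebraic facts $\frac{z^2}{1+z}=z+F(z)$ and $\{|F(z)|\le1\}=\{z\ge-1/2\}$ carrying the argument. (Alternatively one may simply invoke \cite[Lemma 3.4]{BLM2011} or \cite[Thm.~1]{Karandikar1991}.)
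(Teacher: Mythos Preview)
The paper does not prove this lemma at all --- it is simply quoted from \cite[Lemma~3.4]{BLM2011} and \cite[Thm.~1]{Karandikar1991} as a known result, so there is no ``paper's own proof'' to compare against. Your proposal is therefore more than the paper offers: a self-contained verification via the Dol\'eans--Dade formula.

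Your argument is correct. Step~1 (absolute convergence of $J_t$ and the L\'evy property of $W$) and Step~2 (the algebraic reduction of $\cE(W)_t$ to $\cE(U)_t^{-1}$ via $1+\Delta W_s=(1+\Delta U_s)^{-1}$ and the collapse $\tfrac{(\Delta U_s)^2}{1+\Delta U_s}+\tfrac{\Delta U_s}{1+\Delta U_s}=\Delta U_s$) are clean and complete. In Step~3, the two structural identities you isolate, $\tfrac{z^2}{1+z}=z+F(z)$ and $\{|F(z)|\le1\}=\{z\ge-1/2\}$, are exactly what is needed; the remaining drift bookkeeping is routine and your description of it is accurate. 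The only thing I would add, for completeness, is a one-line remark that the integral defining $\gamma_W$ is finite because the integrand equals $\tfrac{z^2}{1+z}=O(z^2)$ on $\{|z|\le1/2\}$ and $\nu_U$ has finite mass outside that set --- but this is implicit in your cancellation argument anyway.
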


Consider \eqref{eq:GOU-explicit} with an arbitrary, not necessarily deterministic starting random variable $V_0$, i.e.
\begin{equation} \label{eq-random-start}
V_t^{V_0} = \cE(U)_t \Big( V_0 + \int_{(0,t]} \cE(U)_{s-}^{-1} \diff \eta _s\Big), \quad t\geq 0.
\end{equation}
In \cite{BLM2011} it was examined under which conditions $V_0$ can be chosen such that $(V_t^{V_0})_{t\geq 0}$ is stationary. Here, $(V_t^{V_0})_{t\geq 0}$ is called \emph{stationary} if its finite-dimensional distributions are shift-invariant. The distribution of $V_0$ is then called a \emph{stationary distribution of $(V_t^x)_{t\geq 0}$}. The stationary distribution is clearly an invariant distribution of the Markov process $(V_t^x)_{t\geq 0}$.

	\begin{lemma}[{\cite[Thm. 2.1]{BLM2011}} and its proof] \label{lemma:BLMThm2.1}
Assume \eqref{eq:assU(A)}.
\begin{enumerate}[label=(\alph*)]
\item
Suppose that $\lim_{t\to \infty} \cE(U)_t=0$ almost surely.
Then a finite random variable $V_0$ can be chosen such that $(V_t^{V_0})_{t\geq 0}$ defined by
\eqref{eq-random-start} is stationary if and only if the limit
		\begin{equation}
			\int_{\HeHe (0,\infty)} \cE(U)_{s-} \diff L_s := \lim_{t\to\infty} \int_{(0,t]} \cE(U)_{s-} \diff L_s \label{eq:thmcausalsol}
		\end{equation}
		exists almost surely and is finite. In that case, the stationary distribution is unique, given by  the law of \eqref{eq:thmcausalsol}, and it is the limit distribution
of $(V_t^x)_{t\geq 0}$ as $t\to\infty$ for arbitrary $x\in \bR$. The stationary process $(V_t^{V_0})_{t\geq 0}$ in this case is also unique in distribution and the starting random variable $V_0$ is independent of $(U_t,L_t)_{t \geq 0}$.
\item Suppose that $\lim_{t\to \infty} \cE(U)_t^{-1} =0$ almost surely. Then a finite random variable $V_0$ can be chosen such that $(V_t^{V_0})_{t\geq 0}$ defined by \eqref{eq-random-start} is stationary if and only if the limit
		\begin{equation}
			-\int_{\HeHe (0,\infty)} \cE(U)_{s-}^{-1} \, \di \eta_s := - \lim_{t\to \infty} \int_{(0,t]} \cE(U)_{s-}^{-1} \, \diff \eta_s, \label{eq:thmnoncausalsol}
		\end{equation}
		exists almost surely and is finite. In that case, the stationary solution is unique and given by $V_t^{V_0} = - \cE(U)_t \int_{(t,\infty)} (\cE(U)_t)^{-1} \, \diff \eta_s$. In particular $V_0$ (and hence the stationary distribution) is unique and given by \eqref{eq:thmnoncausalsol}.
	\end{enumerate}
\end{lemma}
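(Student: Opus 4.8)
The plan is to leverage the pathwise linearity of \eqref{eq-random-start}. I would first record that, for any finite starting variable, $V_t^{V_0}=\cE(U)_tV_0+V_t^0$ where $V_t^0=\cE(U)_tZ_t$ with $Z_t:=\int_{(0,t]}\cE(U)_{s-}^{-1}\diff\eta_s$, and abbreviate $J_t:=\int_{(0,t]}\cE(U)_{s-}\diff L_s$; Lemma~\ref{lemma:BLMLemma3.1} then provides the crucial identity in law $(\cE(U)_t,V_t^0)\overset{d}{=}(\cE(U)_t,J_t)$ for each fixed~$t$. The second tool is a shift identity: writing $(U^{(t)}_r,L^{(t)}_r):=(U_{t+r}-U_t,L_{t+r}-L_t)$ for the post-$t$ increment process --- a L\'evy process distributed as $(U,L)$ and independent of $\cF_t$ --- the Dol\'eans-Dade formula \eqref{Dolean} yields $\cE(U)_{t+r}=\cE(U)_t\,\cE(U^{(t)})_r$, whence $J_{t+r}-J_t=\cE(U)_t\,J^{(t)}_r$ and $Z_{t+r}-Z_t=\cE(U)_t^{-1}Z^{(t)}_r$, with $J^{(t)},Z^{(t)}$ the same functionals built from $(U^{(t)},L^{(t)})$. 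Letting $r\to\infty$ in these relations (once the limits are known to exist) is what couples a stationary start to convergence of the integrals.

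For part~(a): for the ``if'' direction, assume $J_\infty:=\lim_{t\to\infty}J_t$ exists a.s.\ and is finite, and let $V_0$ have law $\law(J_\infty)$, independent of $(U,L)$. Since $V_0$ is then independent of $(\cE(U)_t,V_t^0)$, Lemma~\ref{lemma:BLMLemma3.1} gives $V_t^{V_0}\overset{d}{=}\cE(U)_tV_0+J_t$, while the shift identity gives $J_\infty=J_t+\cE(U)_tJ^{(t)}_\infty$ with $J^{(t)}_\infty\overset{d}{=}J_\infty$ independent of $(\cE(U)_t,J_t)$; comparing the two yields $V_t^{V_0}\overset{d}{=}J_\infty\overset{d}{=}V_0$ for every $t$, and the Markov property promotes this to stationarity of all finite-dimensional distributions. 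The same computation shows $\law(J_\infty)$ is invariant, and since $V_t^x\overset{d}{=}\cE(U)_tx+J_t\to J_\infty$ in law (as $\cE(U)_tx\to0$ a.s.), it is the limit law for every $x$; by the Feller property it is therefore the unique invariant --- hence unique stationary --- distribution, and a Slutsky argument (comparing $\cE(U)_tV_0\to0$ with $V_t^{V_0}\overset{d}{=}V_0$) identifies the law of any stationary start as $\law(J_\infty)$. For ``only if'', stationarity of $(V_t^{V_0})$ makes $V_t^{V_0}$ tight, and $V_t^0=V_t^{V_0}-\cE(U)_tV_0$ with $\cE(U)_tV_0\to0$ a.s., so by Slutsky $V_t^0$ --- and hence $J_t$, by Lemma~\ref{lemma:BLMLemma3.1} --- converges in law; it then remains to upgrade this to a.s.\ convergence of $J_t$ to a finite limit.

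Part~(b) is dual, now using $\cE(U)_t^{-1}\to0$ a.s.\ and Lemma~\ref{lemma:BLMLemma3.4}, which writes $Z_t=\int_{(0,t]}\cE(W)_{s-}\diff\eta_s$ as a L\'evy integral of the same type as $J_t$. For ``if'', assume $Z_\infty:=\int_0^\infty\cE(U)_{s-}^{-1}\diff\eta_s$ exists a.s.\ finite and set $V_0:=-Z_\infty$; then $V_t^{V_0}=\cE(U)_t(Z_t-Z_\infty)=-\cE(U)_t\int_{(t,\infty)}\cE(U)_{s-}^{-1}\diff\eta_s=-Z^{(t)}_\infty$ by the shift identity, which has law $\law(V_0)$, and the same shift computation along a finite time grid gives full stationarity, with the stated explicit form of the stationary solution falling out. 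For ``only if'', stationarity gives tightness of $V_t^{V_0}$, so $Z_t+V_0=\cE(U)_t^{-1}V_t^{V_0}\to0$ in probability, i.e.\ $Z_t\to-V_0$ in probability; once a.s.\ convergence of $Z_t$ is established, uniqueness of limits forces $V_0=-Z_\infty$ a.s., which also gives the uniqueness assertion.

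The step I expect to be the main obstacle is the upgrade, in both ``only if'' directions, from convergence in law (or probability) of the L\'evy integrals $J_t$, resp.\ $Z_t$, to almost-sure convergence to a finite limit. I would handle it by reversing time on each interval $[0,t]$: the process $s\mapsto(U_t-U_{(t-s)-},L_t-L_{(t-s)-})$, $0\le s\le t$, has the same law as $(U,L)$ on $[0,t]$, and under this reversal $J_t$ (resp.\ $Z_t$) becomes a functional of a process with \emph{independent increments}, for which the classical trichotomy ``a.s.\ $\Leftrightarrow$ in probability $\Leftrightarrow$ in distribution'' of convergence (together with tightness of the partial sums) applies. Making this reversal precise at the level of stochastic integrals, rather than for pure-jump series alone, is the technical heart of the argument; once it is in place, the remaining items --- the Markov-property upgrade to finite-dimensional stationarity, uniqueness of the invariant law, and independence of $V_0$ from $(U,L)$ in case~(a) --- follow by standard arguments.
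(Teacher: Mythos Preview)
The paper does not prove this lemma; it is quoted from \cite[Thm.~2.1]{BLM2011} (and its proof), so there is no in-paper argument to compare your proposal against. Your outline is nonetheless a faithful reconstruction of the standard proof: the affine decomposition $V_t^{V_0}=\cE(U)_tV_0+V_t^0$, the use of Lemma~\ref{lemma:BLMLemma3.1} to pass between $V_t^0$ and $J_t$ in law, and the shift identity $J_\infty=J_t+\cE(U)_tJ^{(t)}_\infty$ via the post-$t$ increment process are precisely the structural ingredients one needs. The step you correctly single out as the crux --- upgrading convergence in law of $J_t$ (resp.\ in probability of $Z_t$) to almost-sure convergence to a finite limit --- is indeed the technical heart, and the time-reversal device you propose (under which the integral acquires independent increments, so that the It\^o--Nisio-type trichotomy applies) is exactly the mechanism used in that literature; see also \cite{LindnerMaller2005} and \cite{EricksonMaller2005}, which the present paper cites for precisely this purpose. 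Your sketch is therefore aligned with the source of the cited result.
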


The stationary solution $(V_t^{V_0})_{t\geq 0}$ obtained in (a) is called \emph{causal}, since there $V_0$ is independent of $(U_t,L_t)_{t\geq 0}$ (equivalently, of $(U_t,\eta_t)_{t\geq 0}$), and the distribution of $V_0$ is then called a \emph{causal stationary distribution}. On the other hand, $V_0$ obtained in case (b) is not independent of $(U,L)$ (unless $V_0$ degenerates to a constant), which is why we refer to the solution $(V_t^{V_0})_{t\geq 0}$ and the distribution of $V_0$ in case (b) as \emph{non-causal stationary solution/distribution}. Observe that the stationary distribution is the limit distribution of $(V_t^x)_{t\geq 0}$ for arbitrary $x\in \bR$ in the causal case, while this is in general not true in the non-causal case.

	Necessary and sufficient conditions for almost sure convergence of the \HeHe so-called \emph{exponential functionals} \normal $\int_{(0,\infty)} \cE(U)_{s-} \diff L_s$ and $-\int_{\HeHe (0,\infty)} \cE(U)_{s-}^{-1} \, \di \eta_s$ have been obtained by Erickson and Maller~\cite{EricksonMaller2005}, conditions for the almost sure convergence of $\cE(U)_t$ or $\cE(U_t)^{-1}$ to 0 in Doney and Maller~\cite{DoneyMaller2002}, and all these results are summarised in \cite[Thm.~3.5 -- Cor. 3.7]{BLM2011}. Further, as shown in \cite[Thm.~2.1]{BLM2011}, if neither $\cE(U)_t$ nor $\cE(U)_t^{-1}$ converges almost surely to $0$ as $t\to\infty$, and neither $U$ nor $L$ is the zero-process, then choices of $V_0$ making $(V_t^{V_0})_{t\geq 0}$ stationary exist only in degenerate cases, namely if there exists $k\in\RR\setminus\{0\}$ such that almost surely
	\begin{equation}\label{eq-degenerate} k\cdot U_t=-L_t, \quad t\geq 0.\end{equation}
As shown in \cite[proof of Thm.~2.1]{BLM2011}, Equation~\eqref{eq-degenerate} for $k\neq 0$ is  equivalent to
	\begin{equation}\label{eq-degenerate2}
\int_{(0,t]} \cE(U)_{s-} \diff L_s=k(1-\cE(U)_t) \quad \text{for each $t\geq 0$},
\end{equation}
and the strictly stationary solution is then indistinguishable from the constant process $V_t\equiv k$. \HeHe Distributional properties of exponential functionals, i.e. of the stationary distributions of GOU processes, are widely studied, see e.g. \cite{CarmonaPetitYor,Paulsen,Yorbook} for early studies in this field providing concrete examples under specific assumptions on the driving Lévy processes, \cite{Behme2015, BLexpfunc} for results in a broader context, or the recent review \cite{MinchevSavov}. \normal
\AL For example, when $U$ is a standard Brownian motion, then $\cE(U)_t = \re^{U_t -  t/2}$ converges almost surely to 0 and it follows from the results of Erickson and Maller~\cite{EricksonMaller2005} that $\int_{(0,\infty)} \cE(U)_{s-} \diff L_s$ converges almost surely to a finite random variable if and only if $|L_1|$ has finite log-moment.\normal

	Lastly, as this is of interest in the study of duality, we add the following lemma concerning stochastic monotonicity of GOU processes.
	
	\begin{lemma}\label{lem-GOUstochmon}
Suppose that  \eqref{eq:assU(A)} is satisfied. Then
		the GOU process $(V_t^x)_{t\geq 0}$ solving \eqref{eq:GOUSDE} is stochastically monotone (in the sense that the mapping $x \mapsto \PP (V_t^x \geq y)$ is non-decreasing in $x\in\RR$ for any $y\in \bR$ and $t\geq 0$) if and only if \eqref{eq:assU(B)} is satisfied.
	\end{lemma}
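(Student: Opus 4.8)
The plan is to exploit the affine dependence of the explicit solution on the initial value: for fixed $t \ge 0$, the representation \eqref{eq:GOU-explicit} reads
$$ V_t^x = \cE(U)_t\, x + V_t^0, \qquad V_t^0 := \cE(U)_t \int_{(0,t]} \cE(U)_{s-}^{-1}\,\di\eta_s, $$
where $V_t^0$ does not depend on $x$. Everything will be read off from this identity and from the sign of the stochastic exponential $\cE(U)_t$.

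For the implication ``\eqref{eq:assU(B)} $\Rightarrow$ stochastic monotonicity'' I would argue directly: under \eqref{eq:assU(B)} the Dol\'eans-Dade formula \eqref{Dolean} (and the remark following it) gives $\cE(U)_t > 0$ almost surely, so $x_1 \le x_2$ implies $V_t^{x_1} \le V_t^{x_2}$ pathwise, whence $\{V_t^{x_1}\ge y\}\subseteq\{V_t^{x_2}\ge y\}$ up to a null set and $\PP(V_t^{x_1} \ge y) \le \PP(V_t^{x_2} \ge y)$ for every $y\in\bR$. This direction is routine.

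For the converse I would assume \eqref{eq:assU(A)} but not \eqref{eq:assU(B)}, so that $\nu_U((-\infty,-1)) \in (0,\infty)$ (finite since $(-\infty,-1)$ is bounded away from $0$, positive since $\nu_U(\{-1\})=0$), and fix any $t>0$. First I would observe, via the L\'evy--It\^o decomposition, that the number of jumps of $U$ in $(0,t]$ of size below $-1$ is Poisson distributed with strictly positive mean $t\,\nu_U((-\infty,-1))$, and via \eqref{Dolean} that the sign of $\cE(U)_t$ equals $(-1)$ raised to that number (all remaining factors in \eqref{Dolean} being strictly positive, as no jump of $U$ equals $-1$); hence $p := \PP(\cE(U)_t<0)>0$ (for instance on the event of exactly one such jump). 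Then, writing $g_y(x) := \PP(V_t^x \ge y) = \PP(\cE(U)_t\,x + V_t^0 \ge y)$, I would use dominated convergence together with $\cE(U)_t \ne 0$ a.s. to get $\lim_{x\to-\infty} g_y(x) = \PP(\cE(U)_t<0) = p$ for \emph{every} $y\in\bR$, while $g_y(0) = \PP(V_t^0 \ge y) \to 0$ as $y \to \infty$ because $V_t^0$ is a.s. finite. Choosing $y_0$ with $g_{y_0}(0) < p/2$ and then $x_0<0$ with $g_{y_0}(x_0) > p/2$ produces $x_0 < 0$ with $g_{y_0}(x_0) > g_{y_0}(0)$, contradicting monotonicity of $x\mapsto\PP(V_t^x\ge y_0)$; hence $(V_t^x)_{t\ge 0}$ is not stochastically monotone.

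\textbf{Expected main obstacle.} The forward direction is immediate; the work lies in the converse, and its crux is to isolate one scalar quantity that witnesses non-monotonicity uniformly. The key point is that $g_y(-\infty) = \PP(\cE(U)_t < 0)$ is a \emph{positive constant independent of $y$}, whereas $g_y(0)$ can be pushed below it by enlarging $y$. The only technical care needed is in pinning down the sign of $\cE(U)_t$ from \eqref{Dolean} --- absolute convergence of the product over the small jumps, finiteness of $\nu_U((-\infty,-1))$, and the exclusion of jumps of size exactly $-1$ by \eqref{eq:assU(A)} --- and in confirming that this sign is negative with positive probability precisely when \eqref{eq:assU(B)} fails.
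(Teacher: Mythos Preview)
Your proof is correct and follows essentially the same approach as the paper: both directions hinge on the affine dependence $V_t^x = \cE(U)_t\,x + V_t^0$ and on the observation that $\PP(\cE(U)_t<0)>0$ when \eqref{eq:assU(B)} fails. The only cosmetic difference is that the paper sends the initial value to $+\infty$ and then $y\to-\infty$ to reach the contradiction $1\le \PP(\cE(U)_t>0)<1$, whereas you send $x\to-\infty$ and then $y\to+\infty$ to exhibit an explicit pair $(x_0,y_0)$ violating monotonicity; these are mirror images of the same limiting argument.
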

	\begin{proof}
		\AL First \normal assume \eqref{eq:assU(B)}, i.e. $\Delta U_t>-1$ for all $t$. By \eqref{Dolean} we have $\cE(U)_t >0$ for all $t \ge 0$ and hence  for fixed $t$
		the mapping $$x \mapsto V_t^x = \cE(U)_t \Big( x+ \int_{(0,t]} \cE(U)_{s-}^{-1} \diff \eta _s\Big) $$
		is an increasing, affine function in $x$. Thus, for $x_1 \le x_2$ and $y \in \bR$ we have $\{V_t^{x_1} \ge y\} \subset \{V_t^{x_2} \ge y\}$, and therefore
		$$ \PP(V_t^{x_1} \ge y) \le \PP(V_t^{x_2} \ge y),$$
		thus proving stochastic monotonicity. \\
		\AL For the converse implication, assume that $U$ has jumps of size $<-1$ with positive probability, \normal
		then $\PP(\cE(U)_t < 0) > 0$ by \eqref{Dolean}. Now if $(V_t^x)_{t\geq 0}$ were stochastically monotone, then for all fixed $x,y\in \bR$ and $t>0$ we would have
		\begin{align*}
			\PP(V_t^x \geq y) &\leq \lim_{z\to\infty} \PP(V_t^z \geq y) \\
			& = \lim_{z\to\infty}
			\PP\bigg( \cE(U)_t \left( z +    \int_{(0,t]} \cE(U)_{s-}^{-1} \diff \eta _s \right) \geq y, \cE(U)_t>0\bigg) \\
				& \quad + \lim_{z\to\infty} \PP\bigg( \cE(U)_t \left( z +    \int_{(0,t]} \cE(U)_{s-}^{-1} \diff \eta _s \right) \geq y, \cE(U)_t<0\bigg)  \\
			& = \PP(\cE(U)_t > 0) .
		\end{align*}
		In particular
		$$1 = \lim_{y\to -\infty} \PP(V_t^x \geq y) \leq \PP(\cE(U)_t > 0) < 1,$$
		for $x\in \bR$ and $t>0$, which is a contradiction.		
		Hence $(V_t^x)_{t \ge 0}$ cannot be stochastically monotone.
	\end{proof}

	\subsection{Hitting probabilities}
	
	As described in the introduction, Siegmund's duality is often used to match ruin probabilities, i.e. hitting probabilities of the negative half line, with stationary distributions of the dual processes. To further illustrate this connection, consider the following example which falls in the setting of GOU processes and motivates our considerations in Corollary \ref{cor-GOUhitting}.
	
	\begin{example}\label{ex:risk}
		Assume that $ (Y_t^y)_{t\geq 0}= (R_t^y)_{t\geq 0}$ is a Cramér-Lundberg risk process absorbed in ruin. This process can be defined by
		$$R_t^y := (y+K_t) \, \one_{\{t < \tau(y)\}}, \quad t,y \geq 0,$$
		where
		\begin{align*}
			K_t & := ct - \sum_{i=1}^{N_t} S_i, \qquad
			\tau(y)  := \inf \{ t\geq 0 : y+ K_t \leq 0\},
		\end{align*}
		for some premium rate $c\geq 0$, a Poisson process $(N_t)_{t\geq 0}$ of claim arrivals, and i.i.d. strictly positive claim sizes $\{S_i, i\in\NN\}$ independent of $(N_t)_{t\geq 0}$. The time $\tau(y)$ is the ruin time. In this case, it is well-known that $(R_t^y)_{t\geq 0}$ is dual in the \AL original sense of Siegmund~\cite{Siegmund1976} (i.e.~\eqref{dual} holds for all $x,y,t\geq 0$, not $x,y\in \bR$) \normal to the M/G/1-queue workload process $(X_t^x)_{t\geq 0} = (V_t^x)_{t\geq 0}$ defined by
		$$V_t^x= x+ \sum_{i=1}^{N_t} S_i - \int_{\HeHe (0,t]} c \mathds{1}_{\{V_s^x>0\}} \diff s, \quad x,t\geq 0,$$
		see \cite{Prabhu} for the earliest reference on this relation, or \cite[Chapter III.2]{AsmussenAlbrecher} for a textbook treatment.  (Actually, apart from~\cite[Example~3.5]{Asmussen95}, where no proof is given, in most references equation~\eqref{dual} is only verified for the case $y\geq 0$ and $x=0$. However, the proof given in \cite[Chapter III.2]{AsmussenAlbrecher} can be easily adapted for general $x,y\geq 0$).
		Thus, in the present situation, the ruin probability can be written as
		\begin{equation*}
			\PP(\tau(y)<\infty) = \lim_{t\to \infty} \PP(R_t^y \leq 0) =  \lim_{t\to \infty}\PP(V_t^0\geq y ) = \PP(V\geq y),
		\end{equation*}
		where $V$ is a generic random variable whose law is given by the limiting distribution of the dual process $(V_t^0)_{t\geq 0}$ which is assumed to exist 
		and to have no atom at $y$, cf. \cite[Chapter III.2]{AsmussenAlbrecher}.  \\
		Hence, in this situation, duality allows to transform the hitting (or ruin) probability of $(R_t^y)_{t\geq 0}$ into the cdf of the stationary distribution of $(V_t^x)_{t\geq 0}$, and vice versa. This fact has been used to derive various results in both risk theory and queueing theory; see \cite[Chapter VII.7]{AsmussenAlbrecher} and references therein.		
		Embedding the Cramér-Lundberg process $(R_t^y)_{t\geq 0}$ into an investment market whose dynamics are driven by a Lévy process, say $(W_t)_{t\geq 0}$, yields a capital process that is often referred to as \emph{Paulsen's risk model} \cite{Paulsen}. This is a risk process $(R_t^y)_{t\geq 0}$ whose dynamics are described by the SDE
		\begin{equation} \label{eq-SDE-Paulsen}
			\diff R_t^y = R_{t-}^y \diff W_t + \diff K_t, \quad t\geq 0, \quad R_0^y = y,
		\end{equation}
		which turns out to be of the form \eqref{eq:GOUSDE}.
	\end{example}

	Assuming that  the driving processes have finite jump activity, in \cite{Paulsen} the solution of \eqref{eq-SDE-Paulsen} is studied and also an expression for its ruin probability is derived. This result of Paulsen has been generalized to the setting of this paper by Bankovsky and Sly in \cite{BankSly} and then reads as follows.
	
	\begin{proposition}[{\cite[Thm. 4]{BankSly}}]\label{p-GOU-ruin2}
		Suppose that \eqref{eq:assU(B)} is satisfied and consider the GOU process $(V_t^x)_{t\geq 0}$ solving \eqref{eq:GOUSDE}.
		Assume that $\lim_{t\to\infty} \cE(U)_t^{-1}=0$ almost surely and that the integral $\int_{(0,t]} \cE(U)_{s-}^{-1} \, \di \eta_s$ converges almost surely as $t\to\infty$  to a non-degenerate finite random variable $\int_{\HeHe (0,\infty)} \cE(U)_{s-}^{-1} \, \di \eta_s$ with
		distribution function $H$. Let $\tau(x)= \inf\{t\geq 0, V_t^x \leq 0\}$. Then
		$$\PP(\tau(x)<\infty) \, \EE[H(-V_{\tau(x)}^x)|\tau(x)<\infty]= H(-x), \quad x\geq 0, $$
		where $\EE[\cdot| \tau(x)<\infty]$ is interpreted as $0$ if $\PP(\tau(x)<\infty)=0$.
	\end{proposition}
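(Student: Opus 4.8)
\emph{Notation and reduction.} The plan is to work with $I:=\int_{0}^{\infty}\cE(U)_{s-}^{-1}\,\diff\eta_s$, whose distribution function is $H$; write $\mu$ for its law and note that $H(-x)=\PP(I\le -x)=\PP(x+I\le 0)$ for $x\ge 0$. Under \eqref{eq:assU(B)} one has $\cE(U)_t>0$ for all $t$, so that $V_t^x=\cE(U)_t\,J_t$ with $J_t:=x+\int_{(0,t]}\cE(U)_{s-}^{-1}\,\diff\eta_s$, and $J_t\to x+I$ almost surely by hypothesis. I would dispose of the case $x=0$ at once (then $\tau(0)=0$, $V^0_{\tau(0)}=0$, and both sides of the claim equal $H(0)$), and assume $x>0$ henceforth.

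\emph{A restart identity.} The engine of the proof will be the following. For an $\FF$-stopping time $\sigma$ put, on $\{\sigma<\infty\}$, $(U^\sigma,\eta^\sigma):=(U_{\sigma+\cdot}-U_\sigma,\ \eta_{\sigma+\cdot}-\eta_\sigma)$ and $I^\sigma:=\int_{(0,\infty)}\cE(U^\sigma)_{r-}^{-1}\,\diff\eta^\sigma_r$. First I would establish that, on $\{\sigma<\infty\}$,
\[x+I=\cE(U)_\sigma^{-1}\bigl(V_\sigma^x+I^\sigma\bigr),\]
where $I^\sigma$ is independent of $\cF_\sigma$, has distribution function $H$, and converges almost surely. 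This comes from splitting $I$ at $\sigma$, writing $\int_{(0,\sigma]}\cE(U)_{s-}^{-1}\diff\eta_s=J_\sigma-x=\cE(U)_\sigma^{-1}V_\sigma^x-x$ and using the multiplicativity $\cE(U)_{\sigma+r}=\cE(U)_\sigma\,\cE(U^\sigma)_r$ in $\int_{(\sigma,\infty)}\cE(U)_{s-}^{-1}\diff\eta_s$; the distributional statements follow from the strong Markov property of the L\'evy process $(U,\eta)$ with respect to $\FF$ (recall that $\FF$ is also the augmented natural filtration of $(U,\eta)$, cf.\ \eqref{eq:etaviaUL}--\eqref{eq:BLM(1.3)}).

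\emph{Applying it at $\tau$ and reducing.} Next I would apply the identity with $\sigma=\tau=\tau(x)$, an $\FF$-stopping time as the hitting time of the closed set $(-\infty,0]$ by the c\`adl\`ag strong Markov process $V^x$. Since $\cE(U)_\tau^{-1}>0$, on $\{\tau<\infty\}$ the events $\{x+I\le 0\}$ and $\{V_\tau^x+I^\tau\le 0\}$ coincide; as $I^\tau$ is independent of $\cF_\tau$ with distribution function $H$ while $V^x_\tau$ is $\cF_\tau$-measurable, conditioning on $\cF_\tau$ gives
\[\PP\bigl(x+I\le 0,\ \tau<\infty\bigr)=\EE\bigl[\one_{\{\tau<\infty\}}H(-V_\tau^x)\bigr]=\PP(\tau<\infty)\,\EE\bigl[H(-V_\tau^x)\mid\tau<\infty\bigr].\]
Since $H(-x)=\PP(x+I\le 0)=\PP(x+I\le 0,\tau<\infty)+\PP(x+I\le 0,\tau=\infty)$, this reduces the claim to showing $\PP(x+I\le 0,\ \tau=\infty)=0$ (which also handles the convention $\PP(\tau<\infty)=0$).

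\emph{The null event, and the main obstacle.} On $\{\tau=\infty\}$ one has $V_t^x>0$, hence $J_t>0$, for all $t$, so $x+I=\lim_t J_t\ge 0$ there; thus $\{x+I\le 0,\tau=\infty\}=\{x+I=0,\tau=\infty\}$, whose probability I would denote by $c$. Using the restart identity at a fixed time $\sigma=t$ on $\{x+I=0\}$ forces $V_t^x=-I^t$ with $I^t$ independent of $\cF_t$ of law $\mu$, hence $\{x+I=0,\tau=\infty\}\subseteq E_t\cap\{I^t=-V_t^x\}$ with $E_t:=\{V^x_s>0\text{ for all }s\le t\}\in\cF_t$, so $c\le\EE[\one_{E_t}\,\mu(\{-V_t^x\})]$ for every $t$. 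Now $E_t\downarrow\{\tau=\infty\}$; moreover $\cE(U)_t\to\infty$ almost surely (as $\cE(U)_t^{-1}\to0$ and $\cE(U)_t>0$) and $J_t\to x+I$, so on $\{\tau=\infty,\ x+I>0\}$ one gets $V_t^x\to\infty$ and thus $\mu(\{-V_t^x\})\to0$, whereas on $\{\tau=\infty,\ x+I=0\}$ one only has $\mu(\{-V_t^x\})\le m:=\max_{v\in\bR}\mu(\{v\})$, with $m<1$ precisely because $I$ is non-degenerate. The reverse Fatou lemma (integrands in $[0,1]$) then yields
\[c\ \le\ \limsup_{t\to\infty}\EE\bigl[\one_{E_t}\,\mu(\{-V_t^x\})\bigr]\ \le\ \EE\Bigl[\one_{\{\tau=\infty\}}\limsup_{t\to\infty}\mu(\{-V_t^x\})\Bigr]\ \le\ m\,c,\]
so $c=0$. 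I expect this last step to be the true difficulty: it is the only place where non-degeneracy of $\int_0^\infty\cE(U)_{s-}^{-1}\diff\eta_s$ enters, and it is sharp — in the degenerate case \eqref{eq-degenerate} the process $V^x$ may be constant and the identity fails.
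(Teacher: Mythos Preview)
The paper does not prove this proposition; it is quoted from Bankovsky and Sly \cite[Thm.~4]{BankSly} and used as a black box in Corollary~\ref{cor-GOUhitting}. There is therefore nothing in the present paper to compare your argument against.

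Your proof is correct and self-contained. The restart identity $x+I=\cE(U)_\sigma^{-1}\bigl(V_\sigma^x+I^\sigma\bigr)$ together with the strong Markov property of the L\'evy process $(U,\eta)$ at $\sigma=\tau(x)$ yields $\PP\bigl(\{x+I\le 0\}\cap\{\tau<\infty\}\bigr)=\EE\bigl[\one_{\{\tau<\infty\}}H(-V_\tau^x)\bigr]$ directly, and the self-improving bound $c\le mc$ with $m=\sup_{v}\mu(\{v\})<1$ disposes of the residual event $\{x+I=0,\ \tau=\infty\}$ using only the stated non-degeneracy. One could shortcut this last step by observing that under the standing hypotheses the law of $I$ is in fact atomless --- the paper itself invokes \cite[Thm.~2.2]{BertoinAtoms} for precisely this distribution in the proof of Corollary~\ref{cor-GOUhitting} --- but your route has the merit of being elementary and of matching the hypothesis exactly as written.
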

	
	\section{The dual of the generalized Ornstein-Uhlenbeck process} \label{S-dual}
	\setcounter{equation}{0}
	
	\HeHe Having established the necessary background we now turn to our main results, which will be presented in this and in the forthcoming Section \ref{S-timereverse}. \normal
	\AL	In particular, we will characterize when GOU processes have a dual process, and show that GOU processes are self-dual in the sense that the dual process (when existent) is again a GOU process. Similar phenomena have already been observed in special settings for some $[0,\infty]$-valued Markov processes, see e.g.~\cite[Example 3.5]{Asmussen95}. \normal

%
%
%

	\subsection{\texorpdfstring{Siegmund duality on $\RR$}{Siegmund duality on R}}

%

\AL For the ease of exposition, let us recall the definition of Siegmund-duality on the whole real line, as it was given at the beginning of the introduction. \normal

	\begin{definition}\label{def-dualonR}
		Let $(X_t^x)_{t\geq 0}$ and $(Y_t^y)_{t\geq 0}$ be (universal) time-homogeneous Markov processes on $\RR$, where $x,y\in \bR$. Then $(Y_t^y)_{t\geq 0}$ is said to be \emph{dual (in the sense of Siegmund) to $(X_t^x)_{t\geq 0}$} if 
		for all $x,y\in \RR$
		\begin{equation}\label{dualonR}
			\PP(X_t^x \geq y) = \PP(Y_t^y \leq x), \quad 0\leq t<\infty.
		\end{equation}
	\end{definition}
	
\AL Before we treat duality for GOU processes, let us study some general consequences of the definition. \normal
Note that it follows immediately from \eqref{dualonR} that if $(Y_t^y)_{t\geq 0}$ is dual to $(X_t^x)_{t\geq 0}$, then $(-X_t^x)_{t\geq 0}$ is dual to $(-Y_t^y)_{t\geq 0}$.
	However, duality as defined is not necessarily a symmetric relation as illustrated by the following example.
	
	 \begin{example}
		Let $(N_t)_{t\geq 0}$ be a homogeneous Poisson process (or any other non-trivial subordinator) and define the time-homogeneous Markov processes $(X_t^x)_{t\geq 0}$ and $(Y_t^y)_{t\geq 0}$ by
		\begin{align*}
			X_t^x=\begin{cases}
				x,& X_0^x=x\geq 0,\\
				x-N_t, & X_0^x=x<0,
			\end{cases}\qquad Y_t^y = \begin{cases}
				y, & Y_0^y=y\geq 0,\\
				\min\{y+N_t, 0\}, & Y_0^y=y<0.
			\end{cases}
		\end{align*}
		Then it follows immediately that
		\begin{align*}
			\PP(X_t^x\geq y) = \left.\begin{cases}
				\mathds{1}_{\{x\geq y\}}, & x\geq 0,\\
				\PP(N_t\leq x-y), & x<0
			\end{cases}\right\} = \PP(Y_t^y\leq x),
		\end{align*}
		and hence $(Y_t^y)_{t\geq 0}$ is dual to $(X_t^x)_{t\geq 0}$ in the sense of Definition~\ref{def-dualonR}. However we note that for $y<0$
		\begin{align*}
			\PP(Y_t^y \geq 0) &= \PP(y+N_t\geq 0) \not= 0 = \PP(X_t^0\leq y),
		\end{align*}
		and hence $(X_t^x)_{t\geq 0}$ is not dual to $(Y_t^y)_{t\geq 0}$.
	\end{example}
	
	 \begin{remark}
	 	From the definition, one can easily derive conditions for symmetry of the duality relation as follows.
	 	\begin{enumerate}
	 		\item Suppose that $(Y_t^y)_{t\geq 0}$ is dual to $(X_t^x)_{t\geq 0}$. Then a necessary and sufficient condition for $(X_t^x)_{t\geq 0}$ to be dual to~$(Y_t^y)_{t\geq 0}$ is that
	 		\begin{equation} \label{eq-dual-nec}
	 			\PP(X_t^x = y) \, = \, \PP( Y_t^y = x ) \quad \forall\; x,y\in \RR, \; t \geq 0,\end{equation}
	 		as is easily seen from \eqref{dualonR}.
	 		\item 	Provided that $(Y_t^y)_{t\geq 0}$ is dual to $(X_t^x)_{t\geq 0}$, a sufficient (but not necessary) condition for \eqref{eq-dual-nec} to hold is that
	 		$\PP(X_t^x\geq y)$ is continuous as a function in $x$ for each fixed $y\in \bR$ and $t>0$, and continuous as a function in $y$ for each fixed $x\in \RR$ and $t>0$.
	 	\end{enumerate}
	\end{remark}
	
    \begin{remark}\label{rem-monotonicity}
	   \AL As is the case in the original setting of Siegmund duality~\cite{Siegmund1976}, \normal where $[0,\infty]$-valued Markov processes were considered, it is trivial from \eqref{dualonR} that a \emph{necessary} condition for an $\RR$-valued Markov process $(X_t^x)_{t\geq 0}$ to have a dual process as described in Definition~\ref{def-dualonR} is that the function $x\mapsto \PP (X_t^x \geq y)$ is nondecreasing and right-continuous for all $y\in \RR$ and $t\geq 0$.
    \end{remark}

	The next theorem characterises when a dual of a GOU process $(V_t^x)_{t\geq 0}$ exists and shows that then the dual is again a GOU process $(R_t^y)_{t\geq 0}$, and that then $(V_t^x)_{t\geq 0}$ is also dual to $(R_t^y)_{t\geq 0}$.
	
	\begin{theorem}\label{thm:dualGOU}
		Assume \eqref{eq:assU(A)} and let $(V_t^x)_{t\geq 0}$ be the GOU process given by \eqref{eq:GOU-explicit}.  Then a dual process to $(V_t^x)_{t\geq 0}$ in the sense of Definition~\ref{def-dualonR} exists if and only if \eqref{eq:assU(B)} is satisfied. In that case, the GOU process $(R_t^y)_{t\geq 0}$ solving the SDE
		\begin{equation}
			\diff R_t^y = R_{t-}^y \diff W_t + \diff K_t, \quad R_0^y=y, \label{eq:GOUSDE2}
		\end{equation}
		where $(W_t,K_t)_{t\geq 0}$ is another bivariate L\'evy process defined by
		\begin{eqnarray*}
			\begin{pmatrix}
				W_t \\ K_t
			\end{pmatrix}
			= \begin{pmatrix}
				- U_t  + \sum_{0<s\leq t} \frac{(\Delta U_s)^2}{1+\Delta U_s}+ t \sigma_U^2  \\ -\eta_t
			\end{pmatrix}, \quad t\geq 0,
		\end{eqnarray*}
		is dual to $(V_t^x)_{t\geq 0}$.
        Moreover, $(V_t^x)_{t\geq 0}$ is dual to $(R_t^y)_{t\geq 0}$.
	\end{theorem}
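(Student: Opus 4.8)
The plan is to establish three things in turn: that a dual can exist only if \eqref{eq:assU(B)} holds; that under \eqref{eq:assU(B)} the GOU process $(R_t^y)_{t\ge0}$ solving \eqref{eq:GOUSDE2} is indeed dual to $(V_t^x)_{t\ge0}$; and finally that conversely $(V_t^x)_{t\ge0}$ is dual to $(R_t^y)_{t\ge0}$. For the first point, if some dual to $(V_t^x)_{t\ge0}$ exists in the sense of Definition~\ref{def-dualonR}, then by Remark~\ref{rem-monotonicity} the map $x\mapsto\PP(V_t^x\ge y)$ is nondecreasing for every $y\in\RR$ and $t\ge0$, i.e. $(V_t^x)_{t\ge0}$ is stochastically monotone, and Lemma~\ref{lem-GOUstochmon} then yields \eqref{eq:assU(B)}.

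For the remaining two points, assume \eqref{eq:assU(B)}, so that $\cE(U)_t>0$ for all $t$ by \eqref{Dolean}. The key preliminary step is to rewrite the GOU process $(R_t^y)_{t\ge0}$ solving \eqref{eq:GOUSDE2} in the explicit form \eqref{eq:GOU-explicit}. From $W_t=-U_t+t\sigma_U^2+\sum_{0<s\le t}\frac{(\Delta U_s)^2}{1+\Delta U_s}$ one reads off $\Delta W_s=\frac{-\Delta U_s}{1+\Delta U_s}$, hence $1+\Delta W_s=(1+\Delta U_s)^{-1}\neq 0$, so $(W,K)$ satisfies the analogue of \eqref{eq:assU(A)} and \eqref{eq:GOUSDE2} admits a unique GOU solution given by \eqref{eq:GOU-explicit}. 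By Lemma~\ref{lemma:BLMLemma3.4} we have $\cE(W)_t=\cE(U)_t^{-1}$. Writing $\widetilde\eta$ for the Lévy process associated with $(W,K)$ via \eqref{eq:etaviaUL}, I would compute, using $\Delta\eta_s=\frac{\Delta L_s}{1+\Delta U_s}$, $\Delta K_s=-\Delta\eta_s$, $1+\Delta W_s=(1+\Delta U_s)^{-1}$, and $\sigma_{W,K}=\sigma_{U,L}$ (since the Gaussian part of $\eta$ equals that of $L$ by \eqref{eq:etaviaUL}, so the Gaussian parts of $W$ and $K$ are the negatives of those of $U$ and $L$), that $\frac{\Delta W_s\Delta K_s}{1+\Delta W_s}=\Delta U_s\Delta\eta_s=\frac{\Delta U_s\Delta L_s}{1+\Delta U_s}$; substituting this into \eqref{eq:etaviaUL} for $(W,K)$ and then invoking \eqref{eq:etaviaUL} once more to recover $\eta$ from $(U,L)$ collapses everything to $\widetilde\eta_t=-L_t$. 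Consequently \eqref{eq:GOU-explicit} gives
\[
R_t^y=\cE(U)_t^{-1}\Big(y-\int_{(0,t]}\cE(U)_{s-}\,\diff L_s\Big),\qquad t\ge0 .
\]

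With this in hand the duality relations are immediate. Since $\cE(U)_t>0$, dividing through in \eqref{eq:GOU-explicit} gives $\PP(V_t^x\ge y)=\PP\big(\cE(U)_t^{-1}y-\int_{(0,t]}\cE(U)_{s-}^{-1}\,\diff\eta_s\le x\big)$, and applying the measurable map $(a,b)\mapsto a^{-1}y-a^{-1}b$ to the bivariate distributional identity of Lemma~\ref{lemma:BLMLemma3.1} shows that $\cE(U)_t^{-1}y-\int_{(0,t]}\cE(U)_{s-}^{-1}\diff\eta_s$ has the same law as $\cE(U)_t^{-1}y-\cE(U)_t^{-1}\int_{(0,t]}\cE(U)_{s-}\diff L_s=R_t^y$; hence $\PP(V_t^x\ge y)=\PP(R_t^y\le x)$ for all $x,y\in\RR$, $t\ge0$, i.e. $(R_t^y)_{t\ge0}$ is dual to $(V_t^x)_{t\ge0}$. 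For the converse, since $\cE(U)_t^{-1}>0$ the explicit form of $R$ gives $\PP(R_t^y\ge x)=\PP\big(\cE(U)_t\,x+\int_{(0,t]}\cE(U)_{s-}\diff L_s\le y\big)$, and applying $(a,b)\mapsto ax+b$ to Lemma~\ref{lemma:BLMLemma3.1} identifies $\cE(U)_t\,x+\int_{(0,t]}\cE(U)_{s-}\diff L_s$ in law with $\cE(U)_t\big(x+\int_{(0,t]}\cE(U)_{s-}^{-1}\diff\eta_s\big)=V_t^x$; thus $\PP(R_t^y\ge x)=\PP(V_t^x\le y)$, which is exactly the duality of $(V_t^x)_{t\ge0}$ to $(R_t^y)_{t\ge0}$.

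The only genuine computation --- and the step I expect to be the main obstacle --- is the identification $\widetilde\eta\equiv -L$ of the second component of the driver of $(R_t^y)_{t\ge0}$: one must carry the jump compensator $\sum\frac{\Delta W_s\Delta K_s}{1+\Delta W_s}$ and the Gaussian covariance $\sigma_{W,K}$ through the transformation $(U,L)\mapsto(W,K)$ and verify that they cancel precisely against the correction terms already built into $\eta$ by \eqref{eq:etaviaUL}. Everything else is routine: positivity of $\cE(U)_t$ under \eqref{eq:assU(B)} lets one rearrange the defining inequalities of Definition~\ref{def-dualonR} without the sign case distinction appearing in the proof of Lemma~\ref{lem-GOUstochmon}, and the passage to distributions then rests only on the explicit representation \eqref{eq:GOU-explicit} and the in-law symmetry of Lemma~\ref{lemma:BLMLemma3.1}.
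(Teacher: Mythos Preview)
Your proof is correct and follows essentially the same approach as the paper, relying on the same three ingredients: Lemma~\ref{lem-GOUstochmon} for necessity, Lemma~\ref{lemma:BLMLemma3.4} for $\cE(W)=\cE(U)^{-1}$, and the distributional identity of Lemma~\ref{lemma:BLMLemma3.1} for the passage from the rearranged inequality to the law of $R_t^y$. The only cosmetic differences are that you run the identification of the SDE in the opposite direction (starting from $(W,K)$ and computing the associated $\widetilde\eta=-L$ to obtain the explicit form of $R$, whereas the paper starts from the explicit form and recovers $K=-\eta$ via \eqref{eq:BLM(1.3)}), and that you verify the reverse duality $\PP(R_t^y\ge x)=\PP(V_t^x\le y)$ directly rather than via the atom criterion \eqref{eq-dual-nec}; both variants are equally short.
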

	
	\begin{proof}
		If \eqref{eq:assU(B)} is violated, then $(V_t^x)_{t\geq 0}$ is not stochastically monotone by Lemma~\ref{lem-GOUstochmon}, hence no dual process can exist by Remark~\ref{rem-monotonicity}. So suppose from now on that \eqref{eq:assU(B)} is satisfied. Then $\cE(U)_t>0$ for all $t\geq 0$ by \eqref{Dolean}. Thus, using \eqref{eq:GOU-explicit},
		\begin{align}
			V_t^x \geq y \quad &\Leftrightarrow \quad \cE(U)_t \bigg( x+ \int_{(0,t]} \cE(U)_{s-}^{-1} \diff \eta _s\bigg) \geq y \nonumber \\
			&\Leftrightarrow \quad   \cE(U)_t^{-1} y -  \int_{(0,t]} \cE(U)_{s-}^{-1} \diff \eta _s \leq x. \label{eq:cdfGOU}
		\end{align}
		Further, by Lemma \ref{lemma:BLMLemma3.4},
		\begin{equation} \label{eq:DDEinverse}
			\cE(U)_t^{-1}= \cE(W)_t, \quad t\geq 0,
		\end{equation}
		for the L\'evy process $(W_t)_{t\geq 0}$ given by
			\begin{equation} \label{eq:WviaU}
			W_t= - U_t  + \sum_{0<s\leq t} \frac{(\Delta U_s)^2}{1+\Delta U_s} + t \sigma_U^2 ,
				\end{equation}
		such that in particular $\Delta W_t = \frac{-\Delta U_t}{1+\Delta U_t}>-1$. Moreover, for every $t\geq 0$, by Lemma \ref{lemma:BLMLemma3.1}, 
		\begin{align} \label{eq:BLMLemma3.1}
			\begin{pmatrix}
				\cE(W)_t \\ \int_{(0,t]} \cE(W)_{s-} \diff  \eta_s
			\end{pmatrix} \overset{d}= \begin{pmatrix}
				\cE(W)_t \\ \cE(W)_t \int_{(0,t]} \cE(W)_{s-}^{-1} \diff Z_s
			\end{pmatrix} ,
		\end{align}
		for some L\'evy process $(Z_t)_{t\geq 0}$ that can be computed via \eqref{eq:etaviaUL} as 
		\begin{align*}
			Z_t &= \eta_t - \sum_{0<s\leq t} \frac{\Delta W_s \Delta \eta_s}{1+\Delta W_s} - t\sigma_{W,\eta}\\
			&=  L_t - \sum_{0<s\leq t} \frac{\Delta U_s \Delta L_s}{1+ \Delta U_s} - t \sigma_{U,L} - \sum_{0<s\leq t} \frac{\frac{-\Delta U_s}{1+\Delta U_s} \frac{\Delta L_s}{1+\Delta U_s}}{1+\frac{-\Delta U_s}{1+\Delta U_s}} + 		t\sigma_{U,L} = L_t,
		\end{align*}
		via \eqref{eq:WviaU} and \eqref{eq:etaviaUL}, from which we also derived that $\sigma_{W,\eta}=-\sigma_{U,\eta}=-\sigma_{U,L}$. Thus, inserting \eqref{eq:DDEinverse} in \eqref{eq:cdfGOU}, and applying \eqref{eq:BLMLemma3.1} we observe
		\begin{align*}
			\PP(V_t^x \geq y)& = \PP\bigg( \cE(W)_t y -  \int_{(0,t]} \cE(W)_{s-} \diff \eta _s \leq x\bigg) \nonumber \\
			&= \PP \bigg( \cE(W)_t y - \cE(W)_t \int_{(0,t]} \cE(W)_{s-}^{-1} \diff L_s \leq x \bigg) \nonumber \\
			&= \PP (R_t^y \leq x)
		\end{align*}
		for the GOU process
		\begin{equation} \label{eq:dualGOUexplicit}
			R_t^y =  \cE(W)_t \bigg(y +  \int_{(0,t]} \cE(W)_{s-}^{-1} \diff (-L_s) \bigg), \quad t\geq 0.
		\end{equation}
		Defining the auxiliary Lévy process
		$$-\xi_t = \log \cE(W)_t = W_t - \frac12 \sigma_W^2 t + \sum_{0<s\leq t} \bigg( -\Delta W_s + \log(1+\Delta W_s)\bigg),\quad t \geq 0,$$
		using \eqref{Dolean}, we obtain that the SDE solved by $(R_t^y)_{t\geq 0}$ is hence of the form \eqref{eq:GOUSDE2} with $(W_t)_{t\geq 0}$ as given in \eqref{eq:WviaU} and $(K_t)_{t\geq 0}$ following via \eqref{eq:BLM(1.3)} and \eqref{eq:WviaU} as
		\begin{align*}
			K_t&= -L_t + \sum_{0<s\leq t}  (\re^{-\Delta \xi_s} -1) (-\Delta L_s) - t \sigma_{\xi,-L}\\
			&= -L_t + \sum_{0<s\leq t} \frac{\Delta U_s\Delta L_s}{1+\Delta U_s} + t \sigma_{U,L} = - \eta_t,
		\end{align*}
		since $\sigma_{\xi,-L} = \sigma_{-W,-L} = \sigma_{U,-L} = - \sigma_{U,L}$. As we have shown that $\PP(V_t^x\geq y) = \PP(R_t^y\leq x)$ for all $x,y\in \RR$ and $t\geq 0$, the process $(R_t^y)_{t\geq 0}$ is dual to $(V_t^x)_{t\geq 0}$. Similar calculations, or an application of the obtained result on $(R_t^y)_{t\geq 0}$, show that $\PP(V_t^x= y) = \PP(R_t^y= x)$ so that also $(V_t^x)_{t\geq 0}$ is dual to $(R_t^y)_{t\geq 0}$ by \eqref{eq-dual-nec}.
	\end{proof}

	We can relate the stationary distribution of the GOU $(V_t^x)_{t\geq 0}$ to that of its dual process $(R_t^y)_{t\geq 0}$, provided they exist.

	\begin{corollary} \label{c-dual-stationary}
		Suppose that \eqref{eq:assU(B)} is satisfied, let $(V_t^x)_{t\geq 0}$ be the GOU process given by \eqref{eq:GOU-explicit}, and $(R_t^y)_{t\geq 0}$ its dual process as specified in \eqref{eq:GOUSDE2}.
		\begin{enumerate}[label=(\alph*)]
		\item Suppose $\lim_{t\to\infty} \cE(U)_t=0$ almost surely and that $V_\infty := \int_{\HeHe (0,\infty)} \cE(U)_{s-} \, \diff L_s$ exists almost surely and is finite. Then the distribution of $V_\infty$ is both the stationary distribution of the causal GOU process $(V_t^x)_{t\geq 0}$ as well as the stationary distribution of the non-causal GOU process $(R_t^y)_{t\geq 0}$.
		\item Suppose $\lim_{t\to\infty} \cE(U)_t^{-1}=0$ almost surely and that $R_\infty := \int_{\HeHe (0,\infty)} \cE(U)_{s-}^{-1} \, \diff K_s$ exists almost surely and is finite. Then the distribution of $R_\infty$ is both the stationary distribution of the non-causal GOU process $(V_t^x)_{t\geq 0}$ as well as the stationary distribution of the causal GOU process $(R_t^y)_{t\geq 0}$.
		\end{enumerate}
	\end{corollary}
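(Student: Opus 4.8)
The plan is to read off both statements directly from the structural description of the dual process in Theorem~\ref{thm:dualGOU} together with the two cases of Lemma~\ref{lemma:BLMThm2.1}. The key observation is that Theorem~\ref{thm:dualGOU} identifies the driving pair of the dual GOU process $(R_t^y)_{t\geq 0}$ as $(W_t,K_t)_{t\geq 0}$ with $K_t = -\eta_t$, and—crucially—that the explicit representation \eqref{eq:dualGOUexplicit} shows $R_t^y = \cE(W)_t\big(y + \int_{(0,t]} \cE(W)_{s-}^{-1}\diff(-L_s)\big)$, where $\cE(W)_t = \cE(U)_t^{-1}$ by \eqref{eq:DDEinverse}. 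So the "stochastic exponential" governing $(R_t^y)_{t\geq 0}$ is exactly the reciprocal of the one governing $(V_t^x)_{t\geq 0}$, and the roles of the two convergence hypotheses in Lemma~\ref{lemma:BLMThm2.1}(a) and (b) get swapped when passing from $V$ to $R$.

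For part~(a): the hypothesis $\lim_{t\to\infty}\cE(U)_t = 0$ a.s.\ together with a.s.\ finiteness of $V_\infty = \int_0^\infty \cE(U)_{s-}\diff L_s$ is precisely the hypothesis of Lemma~\ref{lemma:BLMThm2.1}(a) for $(V_t^x)_{t\geq 0}$, so the law of $V_\infty$ is the (causal) stationary distribution of $(V_t^x)_{t\geq 0}$. For the dual process, note that $\cE(W)_t^{-1} = \cE(U)_t \to 0$ a.s., so Lemma~\ref{lemma:BLMThm2.1}(b) applies to $(R_t^y)_{t\geq 0}$ provided the relevant integral converges. By the $\eta$-to-$L$ identity in the proof of Theorem~\ref{thm:dualGOU} (the displayed computation showing $Z_t = L_t$), the integral $\int_{(0,t]}\cE(W)_{s-}^{-1}\diff(-\eta_s)$ has, for each $t$, the same joint law with $\cE(W)_t$ as $\int_{(0,t]}\cE(U)_{s-}\diff(-L_s)$; taking $t\to\infty$ this converges a.s.\ iff $V_\infty$ does, and the non-causal stationary random variable of $(R_t^y)_{t\geq 0}$, namely $-\int_0^\infty \cE(W)_{s-}^{-1}\diff K_s = -\int_0^\infty \cE(W)_{s-}^{-1}\diff(-\eta_s)$, has the same law as $\int_0^\infty \cE(U)_{s-}\diff L_s = V_\infty$. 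Hence the law of $V_\infty$ is simultaneously the causal stationary law of $V$ and the non-causal stationary law of $R$. Part~(b) is entirely symmetric: apply Theorem~\ref{thm:dualGOU} to note that $(V_t^x)_{t\geq 0}$ is itself the dual GOU process of $(R_t^y)_{t\geq 0}$ (the last sentence of Theorem~\ref{thm:dualGOU}), so the argument of part~(a) with the roles of $V$ and $R$ interchanged gives the claim—one only needs to check that the driving data of the dual-of-the-dual really is $(U,L)$ again, which follows from applying the transformation of Theorem~\ref{thm:dualGOU} twice (using that $F\circ F = \mathrm{id}$ on $\bR\setminus\{-1\}$, cf.\ Lemma~\ref{lemma:BLMLemma3.4}, and that negating $\eta$ twice is the identity).

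The cleanest route is probably to prove part~(a) carefully and then obtain part~(b) as the statement of part~(a) applied to the pair $(R,V)$ in place of $(V,R)$, invoking the symmetry "$(V_t^x)_{t\geq 0}$ is dual to $(R_t^y)_{t\geq 0}$" from Theorem~\ref{thm:dualGOU}. The main obstacle—more bookkeeping than genuine difficulty—is to verify that the distributional identity \eqref{eq:BLMLemma3.1} can be passed to the limit $t\to\infty$ so that a.s.\ convergence of one integral is equivalent to a.s.\ convergence of the other, and that the limiting random variables have the claimed laws; this requires noting that the identity in Lemma~\ref{lemma:BLMLemma3.1} holds jointly for all $t$ in the sense needed (or, alternatively, citing that Lemma~\ref{lemma:BLMThm2.1} already packages exactly this equivalence, since its proof in \cite{BLM2011} establishes that the causal integral $\int_0^\infty\cE(U)_{s-}\diff L_s$ and the non-causal integral $-\int_0^\infty\cE(U)_{s-}^{-1}\diff\eta_s$ of the \emph{same} GOU data converge together and have equal law under the stated exponent hypotheses—applying this to the $(W,\eta)$-data then finishes the argument). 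A secondary small point is to confirm that "stationary distribution" in the statement means the law making the process strictly stationary, which matches the terminology fixed just before Lemma~\ref{lemma:BLMThm2.1}, so there is nothing to reconcile.
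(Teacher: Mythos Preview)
Your overall strategy---apply Lemma~\ref{lemma:BLMThm2.1}(a) to $V$ and Lemma~\ref{lemma:BLMThm2.1}(b) to $R$, using $\cE(W)=\cE(U)^{-1}$---is exactly the paper's approach, and part~(b) by symmetry is also how the paper argues. But there is a concrete slip in your execution of part~(a) that derails the argument.

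When you invoke Lemma~\ref{lemma:BLMThm2.1}(b) for $R$, the non-causal stationary random variable is $-\int_0^\infty \cE(W)_{s-}^{-1}\,\diff\eta^R_s$, where $\eta^R$ is the process obtained from the driving pair $(W,K)$ via \eqref{eq:etaviaUL}. It is \emph{not} the integral against $K$ itself. You correctly read off from \eqref{eq:dualGOUexplicit} that $\eta^R=-L$, but then you write the non-causal stationary variable as $-\int_0^\infty \cE(W)_{s-}^{-1}\,\diff K_s$, which is the wrong object. This forces you into the claimed distributional identity ``$\int_{(0,t]}\cE(W)_{s-}^{-1}\,\diff(-\eta_s)$ has the same joint law as $\int_{(0,t]}\cE(U)_{s-}\,\diff(-L_s)$''. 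Since $\cE(W)^{-1}=\cE(U)$, that would say $\int_{(0,t]}\cE(U)_{s-}\,\diff\eta_s \overset{d}{=}\int_{(0,t]}\cE(U)_{s-}\,\diff L_s$, which is not what \eqref{eq:BLMLemma3.1} asserts and is generally false whenever $U$ and $L$ jump together (so that $\eta\neq L$).

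The fix is to do what the paper does: compute $\eta^R$ directly. Using $\Delta W_s=-\Delta U_s/(1+\Delta U_s)$, $\Delta K_s=-\Delta\eta_s=-\Delta L_s/(1+\Delta U_s)$ and $\sigma_{W,K}=\sigma_{U,L}$ in \eqref{eq:etaviaUL} gives $\eta^R_t=-L_t$. Then the non-causal stationary variable of $R$ is $-\int_0^\infty \cE(W)_{s-}^{-1}\,\diff(-L_s)=\int_0^\infty \cE(U)_{s-}\,\diff L_s=V_\infty$ \emph{pathwise}, with no distributional identity or passage to the limit to justify. The whole discussion of passing \eqref{eq:BLMLemma3.1} to $t\to\infty$ becomes unnecessary.
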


	\begin{proof}
		Recall \eqref{eq:DDEinverse} from which $\lim_{t\to\infty}\cE(W)_t^{-1} = \lim_{t\to\infty}\cE(U)_{t}$. Further, the right-hand side of \eqref{eq:etaviaUL} applied to $(W_t,K_t)_{t\geq 0}$ instead of $(U_t,L_t)_{t\geq 0}$ gives
		$$K_t - \sum_{0<s\leq t} \frac{\Delta W_s \Delta K_s}{1+\Delta W_s} - t \sigma_{W,K} = -L_t ,$$
		where we used the definition of $K_t$, and Equations \eqref{eq:WviaU} and \eqref{eq:etaviaUL}, from which $\Delta W_s = \frac{-\Delta U_s}{1+\Delta U_s}$,  $\Delta K_s = -\Delta \eta_s = \frac{-\Delta L_s}{1+\Delta U_s}$, and $\sigma_{W,K} = \sigma_{U,L}$. Assertion (a) is then an immediate consequence of Lemma~\ref{lemma:BLMThm2.1}. The proof of (b) is similar.
	\end{proof}

	As motivated in Example \ref{ex:risk} we aim to investige the connection between the limiting distribution of $(R_t^y)_{t \ge 0}$ and the ruin probability of $(V_t^x)_{t \ge 0}$. This can now be done by combining Proposition~\ref{p-GOU-ruin2} with our results.
	
	\begin{corollary}\label{cor-GOUhitting}
		Under the conditions and with the same notations as in Proposition \ref{p-GOU-ruin2},
		the limit of $\PP(V_t^x\leq 0)$ as $t\to\infty$ exists and is equal to $H(-x) = \PP(R_\infty \geq x)$, so that
		\begin{equation} \label{eq-BaSly1}
			\lim_{t\to\infty} \PP(V_t^x \leq 0) = \PP(R_\infty \geq x) =
			\PP(\tau(x)<\infty) \, \EE[H(-V_{\tau(x)}^x)|\tau(x)<\infty], \quad  x\geq 0,
		\end{equation}
		where $R_\infty = \int_{\HeHe (0,\infty)} \cE(U)_{s-}^{-1} \, \di K_s$ as in Corollary~\ref{c-dual-stationary}~(b).
		In particular, if $-L$ is a subordinator, then
		$$ \PP(\tau(x)<\infty) = \PP(R_\infty\geq x) = H(-x), \quad x \geq 0. $$
	\end{corollary}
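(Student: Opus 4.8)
The plan is to express $\PP(V_t^x\le 0)$ directly through the exponential functional that governs the non-causal stationary law, pass to the limit, and then obtain the right-hand identity by rewriting $H(-x)$ in terms of $R_\infty$ inside Proposition~\ref{p-GOU-ruin2}.

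First I would note that the hypotheses of Proposition~\ref{p-GOU-ruin2} are exactly those required in Corollary~\ref{c-dual-stationary}(b): by Lemma~\ref{lemma:BLMLemma3.4} one has $\cE(U)_t^{-1}=\cE(W)_t$, and by Theorem~\ref{thm:dualGOU} one has $K_t=-\eta_t$, so that
\[
R_\infty=\int_0^\infty\cE(U)_{s-}^{-1}\diff K_s=-\int_0^\infty\cE(U)_{s-}^{-1}\diff\eta_s
\]
exists a.s., is finite and non-degenerate, and, by Corollary~\ref{c-dual-stationary}(b), its law is the causal stationary distribution of $(R_t^y)_{t\ge0}$. In particular, for every $c\in\RR$,
\[
\PP(R_\infty\ge c)=\PP\Big(\int_0^\infty\cE(U)_{s-}^{-1}\diff\eta_s\le -c\Big)=H(-c),
\]
which is precisely the identity $H(-x)=\PP(R_\infty\ge x)$ asserted in the corollary; this step is purely algebraic and needs no limiting argument.

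Next I would use that $\cE(U)_t>0$ under \eqref{eq:assU(B)}, so that from \eqref{eq:GOU-explicit}, $\PP(V_t^x\le 0)=\PP\big(\int_{(0,t]}\cE(U)_{s-}^{-1}\diff\eta_s\le -x\big)$ for all $x\in\RR$, $t\ge0$ (equivalently, by the ``moreover'' part of Theorem~\ref{thm:dualGOU}, $\PP(V_t^x\le0)=\PP(R_t^0\ge x)$, with $R_t^0$ converging in distribution to $R_\infty$ by Lemma~\ref{lemma:BLMThm2.1}(a)). Since $\int_{(0,t]}\cE(U)_{s-}^{-1}\diff\eta_s$ converges a.s.\ to $-R_\infty$ as $t\to\infty$, letting $t\to\infty$ gives $\lim_{t\to\infty}\PP(V_t^x\le0)=\PP(R_\infty\ge x)=H(-x)$; inserting $H(-x)=\PP(R_\infty\ge x)$ into Proposition~\ref{p-GOU-ruin2} then yields \eqref{eq-BaSly1}. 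For the last assertion, suppose $-L$ is a subordinator; then, arguing as in Section~\ref{SPrelimGOU}, $\sigma_{U,L}=0$ and $\Delta K_s=-\Delta\eta_s=-\Delta L_s/(1+\Delta U_s)\ge0$, so $K=-\eta$ is a subordinator and hence $R_\infty=\int_0^\infty\cE(U)_{s-}^{-1}\diff K_s\ge0$ a.s., whence $H(z)=\PP(R_\infty\ge -z)=1$ for all $z\ge0$. Because the paths of $(V_t^x)_{t\ge0}$ are c\`adl\`ag and $\tau(x)$ is an infimum, $V_{\tau(x)}^x\le0$ on $\{\tau(x)<\infty\}$, so $H(-V_{\tau(x)}^x)=1$ there and $\EE[H(-V_{\tau(x)}^x)\mid\tau(x)<\infty]=1$; thus \eqref{eq-BaSly1} collapses to $\PP(\tau(x)<\infty)=H(-x)=\PP(R_\infty\ge x)$.

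The step I expect to be the main obstacle is the passage $\PP(V_t^x\le0)\to\PP(R_\infty\ge x)$: almost sure (hence weak) convergence of $\int_{(0,t]}\cE(U)_{s-}^{-1}\diff\eta_s$ to $-R_\infty$ only ensures convergence of $\PP(\,\cdot\le -x)$ at points where $\PP(R_\infty=x)=0$. One therefore needs that the non-degenerate law of the a.s.\ convergent exponential functional $R_\infty$ has no atoms; this is supplied by the continuity results for laws of integrals of L\'evy processes (Bertoin, Lindner and Maller), according to which such a law is either a Dirac measure or has a continuous distribution function, so that non-degeneracy forces $\PP(R_\infty=x)=0$ for every $x$ and the limit exists and equals $H(-x)$ for all $x\ge0$.
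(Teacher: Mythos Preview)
Your proof is correct and follows essentially the same route as the paper: you rewrite $\PP(V_t^x\le 0)$ either via the explicit formula \eqref{eq:GOU-explicit} or via the duality $\PP(V_t^x\le 0)=\PP(R_t^0\ge x)$, pass to the limit using almost sure convergence of the exponential functional, invoke the Bertoin--Lindner--Maller continuity result to dispose of atoms, and then treat the subordinator case by showing $-\eta$ is a subordinator so that $H\equiv 1$ on $[0,\infty)$. The paper's proof is organised around the duality formulation but is otherwise identical; the only cosmetic difference is that the paper separates out the trivial case $\PP(\tau(x)<\infty)=0$ explicitly before arguing $H(-V_{\tau(x)}^x)=1$, whereas you let the convention in Proposition~\ref{p-GOU-ruin2} absorb it.
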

	
	\begin{proof}
		 By Theorem~\ref{thm:dualGOU}, not only is $(R_t^y)_{t\geq 0}$ dual to $(V_t^x)_{t\geq 0}$, but also  $(V_t^x)_{t\geq 0}$ is dual to $(R_t^y)_{t\geq 0}$. Hence $\PP(V_t^x\leq 0) = \PP(R_t^0 \geq x)$.
		Since the distribution of $R_\infty = \int_{\HeHe (0,\infty)} \cE(U)_{s-}^{-1} \, \di K_s = - \int_{\HeHe (0,\infty)} \cE(U)_{s-}^{-1} \, \di \eta_s$ is the stationary distribution of a causal GOU process which  by assumption is non-degenerate, it cannot have atoms by \cite[Thm.~2.2]{BertoinAtoms}. Since $R_t^0$ converges in distribution to $R_\infty$ as $t\to\infty$ by Lemma~\ref{lemma:BLMThm2.1}~(a), we conclude that $\PP(V_t^x\leq 0)$ converges to $\PP(R_\infty \geq x) = \PP(-R_\infty \leq -x) = H(-x)$.
		The formula \eqref{eq-BaSly1} for the limit is then immediate from Proposition \ref{p-GOU-ruin2}.
		
		For the second statement it suffices to assume that $\PP(\tau(x) < \infty) > 0$,  for if $\PP(\tau(x) < \infty) = 0$ then also $\PP(R_\infty \geq x) = 0$ by \eqref{eq-BaSly1} and the claim follows.
		Note that if $-L$ is a subordinator, also $-\eta$ is a subordinator  by the observation just below~\eqref{eq:GOU-explicitxi}, which implies that $\int_{\HeHe (0,\infty)} \cE(U)_{s-}^{-1} \, \di \eta_s\leq 0$ a.s. and hence $H(0)=1$.
		Since $V_{\tau(x)}^x \leq 0$ on $\{\tau(x) < \infty\}$ by the c\`adl\`ag paths of $(V^x_t)_{t\geq 0}$ this implies $H(-V_{\tau(x)}^x) =1$ and hence the statement when $\PP(\tau(x) < \infty) > 0$ by \eqref{eq-BaSly1}.
	\end{proof}
	
	\subsection{Non-negative Siegmund dual processes} \label{S-dualhalfline}

\AL As mentioned in Section \ref{SPrelimGOU}, if $\Delta U_t>-1$ for all $t$ and $L$ is a subordinator, then $(V_t^x)_{t\geq 0}$ (with $x\geq 0$) has state-space $[0,\infty)$, and hence this GOU process not only fits into the setting of Definition~\ref{def-dualonR}, but also
into the setting originally considered in Siegmund~\cite{Siegmund1976}, where $[0,\infty]$-valued Markov processes were studied.
Thus, according to Siegmund's original work, this GOU process has a dual process on $[0,\infty]$ in the sense that \eqref{dualonR} holds for all starting values $x,y\geq 0$. The precise form of this process will be derived in the next proposition. \normal
	
	\begin{proposition}\label{prop-dualhalfline}
		Assume \eqref{eq:assU(B)} and let $(V_t^x)_{t\geq 0}$ with $x\geq 0$ be the GOU given by \eqref{eq:GOU-explicit}. Further assume that  $L$ is a subordinator. Let $(R_t^y)_{t\geq 0}$ with $y\geq 0$ be the solution of \eqref{eq:GOUSDE2} and set
		$$\tau_{R}(y):= \inf\{t\geq 0: R_t^y\leq 0\}. $$
		Then the process $(\widehat{R}_t^y)_{t\geq 0}$, $y\geq 0$, given by
		$$\widehat{R}_t^y:= R_t^y \mathds{1}_{\{t<\tau_{R}(y)\}},$$
		is a time-homogeneous Markov process on $[0,\infty)$, and it is dual to $(V_t^x)_{t\geq 0}$, $x\geq 0$, in the sense that \AL \eqref{dualonR} holds for all $x,y\geq 0$ and $t\geq 0$. \normal Moreover, we have $\widehat{R}_t^y = \max\{ R_t^y, 0\}$ for $t,y\geq 0$.
	\end{proposition}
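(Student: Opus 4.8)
The strategy is to reduce the statement to Theorem~\ref{thm:dualGOU} by means of a single monotonicity observation. Since $L$ is a subordinator, the integral $I_t := \int_{(0,t]} \cE(W)_{s-}^{-1}\,\diff(-L_s)$ is nonincreasing in $t$ with $I_0 = 0$, its integrand being positive and $-L$ nonincreasing. By \eqref{eq:dualGOUexplicit} we have $R_t^y = \cE(W)_t\,(y + I_t)$, and $\cE(W)_t > 0$ for all $t$ by \eqref{Dolean}, because $\Delta W_t = -\Delta U_t/(1+\Delta U_t) > -1$ under \eqref{eq:assU(B)}. Hence $\{t \ge 0 : R_t^y \le 0\} = \{t \ge 0: y + I_t \le 0\}$; since $I$ is nonincreasing and right-continuous, this set is either empty or a closed half-line $[\tau_R(y), \infty)$, so that $R_t^y > 0$ for $t < \tau_R(y)$ and $R_t^y \le 0$ for $t \ge \tau_R(y)$.

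The identity $\widehat R_t^y = \max\{R_t^y, 0\}$ then follows at once: for $t < \tau_R(y)$ both sides equal $R_t^y$, and for $t \ge \tau_R(y)$ both equal $0$. For the duality, fix $x, y \ge 0$ and $t \ge 0$; since $x \ge 0$, one has $\{\widehat R_t^y \le x\} = \{\max\{R_t^y, 0\} \le x\} = \{R_t^y \le x\}$, so Theorem~\ref{thm:dualGOU} (which yields $\PP(V_t^x \ge y) = \PP(R_t^y \le x)$) gives $\PP(V_t^x \ge y) = \PP(\widehat R_t^y \le x)$, i.e.\ \eqref{dual}.

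It remains to show that $(\widehat R_t^y)_{t \ge 0}$, $y \ge 0$, is a time-homogeneous Markov process on $[0,\infty)$; nonnegativity and a.s.\ finiteness are clear from $\widehat R_t^y = \max\{R_t^y, 0\}$. The crucial point is that, by monotonicity of $I$, $\{t < \tau_R(y)\} = \{y + I_t > 0\} \in \cF_t$, so $\tau_R(y)$ is an $\FF$-stopping time, $\widehat R^y$ is $\FF$-adapted, and $\{\widehat R_s^y = 0\} = \{s \ge \tau_R(y)\}$. To verify the Markov property, fix $0 \le s \le t$ and bounded measurable $f$ and split on these two events. On $\{s \ge \tau_R(y)\}$ one has $\widehat R_u^y = 0$ for every $u \ge s$, hence $f(\widehat R_t^y) = f(0)$. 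On $\{s < \tau_R(y)\} \in \cF_s$ one has $\widehat R_s^y = R_s^y > 0$ and $\tau_R(y) - s = \inf\{v \ge 0 : R_{s+v}^y \le 0\}$, so that $\widehat R_t^y = \Psi\big((R_{s+v}^y)_{v\ge 0}\big)$, where $\Psi$ is the path functional $\Psi(\omega) := \omega(t-s)\,\mathds{1}_{\{t-s < \inf\{v\ge 0:\,\omega(v)\le 0\}\}}$, which satisfies $\Psi\big((R_v^z)_{v\ge 0}\big) = \widehat R_{t-s}^z$ for every $z \ge 0$. Applying the Markov property of the GOU process $R$ at the deterministic time $s$ gives $\EE\big[f(\widehat R_t^y)\mathds{1}_{\{s<\tau_R(y)\}}\mid\cF_s\big] = \mathds{1}_{\{s<\tau_R(y)\}}\,g(R_s^y)$ with $g(z) := \EE[f(\widehat R_{t-s}^z)]$; since $g(0) = f(0)$, adding the two contributions yields $\EE[f(\widehat R_t^y)\mid\cF_s] = g(\widehat R_s^y)$. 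As $g$ depends on $s,t$ only through $t-s$ and is independent of $y$, this is the time-homogeneous Markov property with a well-defined transition semigroup on $[0,\infty)$.

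I expect the third paragraph to be the main obstacle: making the absorption/shift argument rigorous — identifying the post-$s$ evolution of $\widehat R^y$ with the functional $\Psi$ applied to the shifted path of $R$, and using that, thanks to the monotonicity of $I$, the state $0$ is genuinely absorbing for $\widehat R$ (so that nothing goes wrong from $R$ possibly re-entering $(0,\infty)$ after $\tau_R(y)$, which it cannot here). The remaining points — stopping-time measurability, the split, and the identification of $g$ — are routine.
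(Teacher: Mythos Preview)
Your proof is correct and follows essentially the same strategy as the paper: use the explicit formula \eqref{eq:dualGOUexplicit} to show that $R^y$ never re-enters $(0,\infty)$ after hitting $(-\infty,0]$, deduce $\widehat R_t^y=\max\{R_t^y,0\}$, and then reduce the duality to Theorem~\ref{thm:dualGOU}. The only differences are cosmetic: the paper argues non-return via ``$y\le 0\Rightarrow R_t^y\le 0$ for all $t$'' combined with time-homogeneity of $R$, whereas you use the monotonicity of $t\mapsto I_t$ directly; and for the Markov property of $\widehat R$ the paper simply invokes the standard fact that killing a time-homogeneous Markov process at a hitting time yields a time-homogeneous Markov process, while you spell this out in detail.
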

	
	\begin{proof}
		Recall from Theorem \ref{thm:dualGOU} and its proof that $(R_t^y)_{t\geq 0}$ is dual to $(V_t^x)_{t\geq 0}$ in the sense of \eqref{dualonR}, i.e.
		$$\PP(V_t^x\geq y) = \PP(R_t^y \leq x), \quad \forall t\geq 0, \; \forall x,y\in\RR,$$
		and that $R_t^y$ is explicitly given by \eqref{eq:dualGOUexplicit}. As $(R_t^y)_{t\geq 0}$ -- being a GOU process -- is a time-homogeneous Markov process, its killed version $(\widehat{R}_t^y)_{t\geq 0}$ is a time-homogeneous Markov process as well. Moreover, under the given conditions, $\cE(W)_t=\cE(U)_t^{-1}>0$ for all $t\geq 0$, and as $L$ is a subordinator it follows from \eqref{eq:dualGOUexplicit} that
		$$y\leq 0\quad \Rightarrow \quad R_t^y\leq 0 ,\; \forall t\geq 0.$$
		Due to time-homogeneity of $(R_t^y)_{t\geq 0}$ this implies $R_t^y\leq 0$ for any $y\in \RR$, and for all $t\geq \tau_R(y)$. In particular
		$$ \widehat{R}_t^y= R_t^y \mathds{1}_{\{t<\tau_{R}(y)\}} = \max\{R_t^y, 0\}, \quad t\geq 0.$$
		Thus, for all $t\geq 0$ and all $x,y\geq 0$
		\begin{align*}
			\PP(\widehat{R}_t^y\leq x) &= \PP(\max\{R_t^y, 0\}\leq x)
			= \PP(R_t^y\leq x)  = \PP(V_t^x\geq y),
		\end{align*}
		which implies the statement.
	\end{proof}

	\begin{remark}	
		Observe that although $(V_t^x)_{t\geq 0}$, $x\in \bR$, is dual to $(R_t^y)_{t\geq 0}$, $y\in \bR$, in the sense of \eqref{dualonR} by Theorem~\ref{thm:dualGOU}, in the situation of Proposition~\ref{prop-dualhalfline} the process $(V_t^x)_{t\geq 0}$, $x\geq 0$, is in general \emph{not dual} to $(\widehat{R}_t^y)_{t\geq 0}$, $y\geq 0$, \AL in the sense that \eqref{dualonR} holds for all $x,y\geq 0$. \normal This can be seen from
		$$\PP(\widehat{R}_t^0 \geq 0)=1 \neq \PP\left( \cE(U)_t \int_{(0,t]} \cE(U)_{s-}^{-1} \diff \eta_s\leq 0 \right) = \PP(V_t^0\leq 0) \quad\forall\; t>0,$$
		provided $\eta$ is not the zero-subordinator. Since the function $[0,\infty) \to [0,1]$, $y\mapsto \PP(\widehat{R}_t^y\geq x)$, is nondecreasing and right-continuous for any $t,x\geq 0$, by Siegmund's result \AL (\cite[Thm. 1]{Siegmund1976}) \normal the process $(\widehat{R}_t^y)_{t\geq 0}$, $y\geq 0$, has a dual on $[0,\infty]$, but it is not $(V_t^x)_{t\geq 0}$, but instead the process $(\widehat{V}_t^x)_{t\geq 0}$ defined as
		$$ \widehat{V}_t^x= V_t^x \mathds{1}_{\{t<\tau_{V}(x)\}}, \quad\text{with } \tau_{V}(x):= \inf\{t\geq 0: V_t^x\leq 0\}.$$
		To see this, observe that $\widehat{V}_t^0\equiv 0$ which implies
		$$\PP(\widehat{R}^y_t\geq 0)=1=\PP(\widehat{V}_t^0\leq y)\quad \text{for all } y\geq 0, t\geq 0.$$
		Further, for $x>0$, as $L$ and hence $\eta$ is a subordinator, $V_t^x>0$ for all $t\geq 0$, implying $\tau_V(x)=\infty$ almost surely. With this it follows from Theorem \ref{thm:dualGOU} and Proposition \ref{prop-dualhalfline} for all $y\geq 0, x>0$ and $t\geq 0$
		\begin{align*}
			\PP(\widehat{R}^y_t\geq x) &= \PP(\max\{R_t^y,0\}\geq x) = \PP(R_t^y\geq \HeHe x\normal)\\
			&= \PP(V_t^x\leq y) = \PP(V_t^x \mathds{1}_{\{t<\tau_{V}(x)\}} \leq y)  = \PP(\widehat{V}_t^x \leq y).
		\end{align*}
		Hence $(\widehat{V}_t^x)_{t\geq 0}$ is dual to $(\widehat{R}_t^y)_{t\geq 0}$ in the sense \AL that \eqref{dualonR} holds for all $x,y\geq 0$, \normal but again the converse statement fails as can easily be checked considering the duality relation for $x=0$.
	\end{remark}

	As a consequence of Proposition~\ref{prop-dualhalfline} we can relate the ruin probability of $(R_t^y)_{t\geq 0}$ to the limit distribution of $(V_t^0)_{t\geq 0}$ in the subordinator case.

	\begin{corollary} \label{c-subordinator-ruin}
		Assume \eqref{eq:assU(B)} and let $(V_t^x)_{t\geq 0}$, $(R_t^y)_{t\geq 0}$ and $\tau_R(y)$  be as in Proposition~\ref{prop-dualhalfline} with $L$ being a subordinator. Then
		$$\PP(\tau_R(y) < \infty) = \lim_{t\to\infty} \PP(V_t^0 \geq y)\quad \text{for every }y\geq 0.$$
		Denote $V_\infty := \int_{\HeHe (0,\infty)} \cE(U)_s \, \diff L_s$, which is a $[0,\infty]$-valued numerical random variable (in case of almost sure convergence, it is finite). Then, if neither $U$ nor $L$ is the zero-process and if there exists no $k\in\RR\setminus \{0\}$ such that \eqref{eq-degenerate} holds, then
		$$\PP(\tau_R(y) < \infty) = \PP(V_\infty \geq y)$$
		for every $y\geq 0$.
		Finally, if additionally $\lim_{t\to\infty} \cE(U)_t =0$ almost surely, then
		$$\PP(\tau_R(y) < \infty) = \lim_{t\to\infty} \PP(V_t^x \geq y)$$
		for all $x,y \geq 0$.
	\end{corollary}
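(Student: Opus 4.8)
The plan is to establish the three assertions sequentially, leveraging the Siegmund duality between $(\widehat{R}_t^y)_{t\geq 0}$ and $(V_t^x)_{t\geq 0}$ (on the half-line) from Proposition~\ref{prop-dualhalfline} together with the description of stationary distributions from Lemma~\ref{lemma:BLMThm2.1} and the degenerate-case analysis recalled around \eqref{eq-degenerate}--\eqref{eq-degenerate2}.

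First I would prove the identity $\PP(\tau_R(y)<\infty) = \lim_{t\to\infty}\PP(V_t^0\geq y)$. Since $L$ is a subordinator, $(R_t^y)_{t\geq 0}$ started in $y\leq 0$ stays nonpositive, so by time-homogeneity $\widehat{R}_t^y = \max\{R_t^y,0\}$ is absorbed at $0$ once it hits $0$; thus $\{\tau_R(y)\leq t\} = \{\widehat{R}_t^y = 0\} = \{\widehat{R}_t^y \leq 0\}$ up to the boundary, and the events $\{\tau_R(y)\leq t\}$ increase to $\{\tau_R(y)<\infty\}$. Hence $\PP(\tau_R(y)<\infty) = \lim_{t\to\infty}\PP(\widehat{R}_t^y \leq 0)$. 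Applying the Siegmund duality $\PP(\widehat{R}_t^y \leq x) = \PP(V_t^x\geq y)$ from Proposition~\ref{prop-dualhalfline} with $x=0$ gives the claim. One has to be slightly careful about whether the limit is over $\{\widehat{R}_t^y \le 0\}$ or $\{\widehat{R}_t^y < \varepsilon\}$, but since the killed process is exactly $0$ after absorption this causes no issue.

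Next, under the stated non-degeneracy hypothesis, I would identify $\lim_{t\to\infty}\PP(V_t^0\geq y)$ with $\PP(V_\infty\geq y)$ where $V_\infty = \int_0^\infty \cE(U)_s\,\diff L_s$ is the $[0,\infty]$-valued increasing limit of $\int_{(0,t]}\cE(U)_s\,\diff L_s$ (the integrand is nonnegative because $L$ is a subordinator and $\cE(U)_s>0$, so the limit exists as a numerical random variable regardless of finiteness). By Lemma~\ref{lemma:BLMLemma3.1}, $V_t^0 = \cE(U)_t\int_{(0,t]}\cE(U)_{s-}^{-1}\,\diff\eta_s \overset{d}{=} \int_{(0,t]}\cE(U)_{s-}\,\diff L_s$, so $\PP(V_t^0\geq y) = \PP(\int_{(0,t]}\cE(U)_{s-}\,\diff L_s \geq y)$, and the right-hand side converges to $\PP(V_\infty \geq y)$ by monotone convergence of the integrals provided $\PP(V_\infty = y) = 0$ — and since $V_\infty$ is an increasing limit, monotonicity of the distribution functions actually gives $\lim_t \PP(\int_{(0,t]}\cE(U)_{s-}\,\diff L_s \geq y) = \PP(V_\infty \geq y)$ directly at any continuity point and, because the events $\{\int_{(0,t]}\cE(U)_{s-}\,\diff L_s \ge y\}$ increase, even at all $y$. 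The point where the non-degeneracy hypothesis enters is only to ensure this is a meaningful statement and consistent with Corollary~\ref{c-dual-stationary}; strictly, for the equality of limits I just need the monotone convergence of the increasing integrals, which always holds. I expect the main subtlety here to be the measure-theoretic bookkeeping for the $[0,\infty]$-valued limit and justifying the interchange of limit and probability at the possible atom $\{V_\infty = y\}$; one resolves this by noting $\{\int_{(0,t]}\cE(U)_{s-}\,\diff L_s \ge y\} \uparrow \{V_\infty \ge y\}$ as $t\to\infty$ by path-monotonicity, so $\PP$ of the left side increases to $\PP$ of the right side with no continuity assumption needed.

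Finally, under the additional assumption $\lim_{t\to\infty}\cE(U)_t = 0$ a.s., I would upgrade from $x=0$ to arbitrary $x\geq 0$. Here two cases combine: if $V_\infty<\infty$ a.s., then by Lemma~\ref{lemma:BLMThm2.1}~(a) the law of $V_\infty$ is the limit distribution of $(V_t^x)_{t\geq 0}$ for \emph{every} starting value $x\in\bR$, so $\lim_{t\to\infty}\PP(V_t^x\geq y) = \PP(V_\infty\geq y) = \PP(\tau_R(y)<\infty)$ by the first two parts. If instead $V_\infty = \infty$ with positive probability, then from the explicit form $V_t^x = \cE(U)_t(x + \int_{(0,t]}\cE(U)_{s-}^{-1}\diff\eta_s)$ and $V_t^0 = \cE(U)_t\int_{(0,t]}\cE(U)_{s-}^{-1}\diff\eta_s$ one has $V_t^x - V_t^0 = x\,\cE(U)_t \to 0$ a.s., so $V_t^x$ and $V_t^0$ have the same limiting behavior in distribution; combined with the first part this yields $\lim_{t\to\infty}\PP(V_t^x\geq y) = \lim_{t\to\infty}\PP(V_t^0\geq y) = \PP(\tau_R(y)<\infty)$. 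In both cases the conclusion follows, and the work is in making the "same limiting distribution" step rigorous — for which the cleanest route is the a.s. convergence $x\,\cE(U)_t\to 0$ together with Slutsky's theorem applied along the common distributional limit (or, where no finite limit exists, directly to the convergence in probability to $+\infty$).
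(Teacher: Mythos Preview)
Your first assertion is correct and matches the paper's argument exactly.

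There is, however, a genuine gap in your second assertion. You claim that because the events $A_t:=\{\int_{(0,t]}\cE(U)_{s-}\,\diff L_s \ge y\}$ increase in $t$, continuity from below gives $\lim_t\PP(A_t)=\PP(V_\infty\ge y)$ at \emph{all} $y$. This is false: the increasing union $\bigcup_t A_t$ satisfies only $\{V_\infty>y\}\subseteq\bigcup_t A_t\subseteq\{V_\infty\ge y\}$, since on the event $\{V_\infty=y,\ I_t<y\ \forall t\}$ no $A_t$ occurs. Hence monotone convergence yields only $\PP(V_\infty>y)\le\lim_t\PP(A_t)\le\PP(V_\infty\ge y)$, and you still need $\PP(V_\infty=y)=0$ to conclude. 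The non-degeneracy hypothesis is therefore not decorative: the paper uses it in two steps. First, the Erickson--Maller trichotomy (invoked via \cite[Thm.~2.1]{EricksonMaller2005}) shows that once \eqref{eq-degenerate} is excluded, $V_\infty$ is either finite a.s.\ or equal to $+\infty$ a.s. Second, in the finite case, \cite[Thm.~2.2]{BertoinAtoms} shows that the law of $V_\infty$ is atomless. These two facts together give $\PP(V_\infty=y)=0$ for every $y\in\bR$, which is exactly what closes your argument.

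The same issue propagates to your third assertion: the Slutsky step $V_t^x=V_t^0+x\,\cE(U)_t\Rightarrow V_\infty$ only yields $\PP(V_t^x\ge y)\to\PP(V_\infty\ge y)$ at continuity points of the limit law, so you again need absence of atoms. The paper handles both parts uniformly by working with the almost-sure representative $\cE(U)_t x+\int_{(0,t]}\cE(U)_{s-}\,\diff L_s\overset{d}{=}V_t^x$, which converges a.s.\ to $V_\infty$ (for $x=0$, or for $x>0$ when $\cE(U)_t\to0$), and then invoking atomlessness once.
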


	\begin{proof}
		Observe that $(\widehat{R}_t^y)$ as defined in Proposition~\ref{prop-dualhalfline} stays at 0 once it has reached 0. Since $(\widehat{R}_t^y)_{t\geq 0}$ is dual to $(V_t^x)_{t\geq 0}$ in the sense of \eqref{dual}, for fixed $y\geq 0$ we have
		\begin{align*}
		\PP(\tau_R(y) < \infty) & = \PP (\exists\, t \geq 0 : \widehat{R}_t^y = 0)  = \lim_{t\to\infty} \PP(\widehat{R}_t^y \leq 0) = \lim_{t\to\infty} \PP(V_t^0 \geq y).
		\end{align*}
		Let $\xi_t = - \log \cE(U)_t$ as in \eqref{eq:GOU-explicitxi}, then according to \cite[Thm. 2.1]{EricksonMaller2005}, $\int_{(0,t]} \re^{-\xi_{s-}} \diff L_s = \int_{(0,t]} \cE(U)_{s-}^{-1} \, \di L_s$ either converges a.s. to a finite random variable as $t\to\infty$, or diverges to $\infty$ in probability (and hence almost surely since $L$ is a subordinator), or \eqref{eq-degenerate2} holds for some $k\neq 0$. But \eqref{eq-degenerate2} is equivalent to \eqref{eq-degenerate} which has
		been ruled out by assumption. Hence $V_\infty$ is either finite almost surely or almost surely equal to $+\infty$.  When $V_\infty$ is finite almost surely, its distribution cannot have atoms by \cite[Thm.~2.2]{BertoinAtoms} since \eqref{eq-degenerate} has been ruled out. Hence, in all cases the distribution of $V_\infty$ has no atoms on the real line.\\
		 From \eqref{eq:GOU-explicit} and Lemma~\ref{lemma:BLMLemma3.1} we see that $V_t^x$ has the same distribution as $\cE(U)_t x + \int_{(0,t]} \cE(U)_{s-} \, \diff L_s$, which converges almost surely to $V_\infty$ as $t\to\infty$ if $x=0$, or if $x>0$ and additionally  $\cE(U)_t$ converges to $0$ as $t\to\infty$. Since $V_\infty$ has no atom at $y$, we then have $\lim_{t\to\infty} \PP(V_t^x \geq y) = \PP(V_\infty \geq y)$ in both cases, giving the claim.
	\end{proof}

	\section{The inverse flow of the generalized Ornstein-Uhlenbeck process} \label{S-timereverse}
	\setcounter{equation}{0}
	
	In contrast to duality, which can be considered a distributional concept, time-reversals of stochastic processes are typically defined pathwise. Therefore, we use a different framework in this section than before and study the GOU process in terms of its stochastic flow and the inverse stochastic flow. This is possible because, in the GOU setting, the structure of the SDE admits an explicit inverse mapping for the jumps, rather than applying the generalized inverse used in \cite{AsmussenSigman1996}. In comparison with the time-reversed SDE that can be derived via \cite[Thm. VI.4.22]{Protter2003}, in Theorem \ref{thm:time_reversed_GOU}  we obtain the same SDE for the inverse stochastic flow. Similar considerations appear at the end of Chapter VI.4 in \cite[Proof of Thm. VI.4.23]{Protter2003}, where time-reversals of SDEs of Itô diffusions are considered. \\
	Recall the definition of the \emph{time-reversal} $\widetilde{X} \HeHe =(\widetilde{X}_s)_{s\in [0,t]}\normal$ at $t>0$ of some càdlàg process $X=(X_s)_{s\in [0,t]}$ as
	\begin{equation}\label{eq:deftimereverse}
		\widetilde{X}_s = \begin{cases}
			0, & \text{if }s=0, \\
			X_{(t-s)-} - X_{t-}, & \text{if }0<s<t, \\
			X_0 - X_{t-}, & \text{if }s=t.
		\end{cases}
	\end{equation}
	For $t=1$ this coincides with the definition given in \cite[Chapter VI.4]{Protter2003}. It is well known that when $X$ is a L\'evy process, then $(\widetilde{X}_s)_{s\in [0,t]}$ is also a L\'evy process, with the same distribution as $(-X_s)_{s\in [0,t]}$, see e.g.~\cite[Lem.~II.2]{Bertoin1996}, \cite[Thm.~11.4]{BrockwellLindner2024} or \cite[Prop.~41.8]{Sato1999}. Stochastic integrals with respect to time-reversed L\'evy processes are then to be seen with respect to their augmented natural filtration, for which the time-reversed L\'evy processes are semimartingales.	

	As before, let $(V_t^x)_{t\geq 0}$ be a solution of the SDE \eqref{eq:GOUSDE} for the bivariate L\'evy process $(U_t,L_t)_{t\geq 0}$. As no stochastic monotonicity is needed in the context of time-reversals, we shall only assume \eqref{eq:assU(A)}, i.e. $\Delta U_t\neq -1$ for all $t$.
		
	\begin{theorem}\label{thm:time_reversed_GOU}
		Let $(V_s^x)_{s\geq 0}$, $x\in\RR$, be a solution of the SDE~\eqref{eq:GOUSDE} for the bivariate L\'evy process $(U_s,L_s)_{s\geq 0}$ fulfilling \eqref{eq:assU(A)}, as given in \eqref{eq:GOU-explicit} with $\eta$ defined in \eqref{eq:etaviaUL}. Let $t>0$  be fixed.
		Denote the time-reversal of $(U,L,\eta)$ on $[0,t]$ by $(\widetilde{U}, \widetilde{L}, \widetilde{\eta})$, and define
		$$T_s := \widetilde{U}_s + \sigma_U^2 s + \sum_{0<r\le s} \frac{(\Delta \widetilde{U}_r)^2}{1-\Delta \widetilde{U}_r}, \quad 0\leq s\leq t.$$
		Then $(T_s,\widetilde{L}_s, \widetilde{\eta}_s)_{s\in [0,t]}$ is a L\'evy process on $[0,t]$ and, almost surely, for all $s\in [0,t]$
		\begin{equation}\label{eq-flow-0}
			V_{(t-s)-}^x = \cE(T)_s \left(V_t^x + \int_{(0,s]} \cE(T)_{u-}^{-1} \, \di \widetilde{L}_u\right).
		\end{equation}
		Further, the stochastic process corresponding to the inverse stochastic flow induced by \eqref{eq-flow-0}, i.e. the process $(R_s^y)_{s\in [0,t]}$, $y\in \bR$, defined by
		\begin{equation} \label{eq-flow-1}
			R_s^y := \cE(T)_s \left(y + \int_{(0,s]} \cE(T)_{u-}^{-1} \diff \widetilde{L}_u\right),
		\end{equation}
		is the GOU process which is the unique solution of the SDE
		\begin{equation} \label{eq:time reversed GOU}
			\diff R_s^y = R_{s-}^y \diff T_s + \diff \widetilde{\eta}_s, \quad s\in [0,t], \quad R_0^y = y.
		\end{equation}
	\end{theorem}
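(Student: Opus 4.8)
The plan is to prove the three assertions in order: first that $(T_s,\widetilde L_s,\widetilde\eta_s)_{s\in[0,t]}$ is a Lévy process, then the pathwise flow identity \eqref{eq-flow-0}, and finally that \eqref{eq-flow-1} solves \eqref{eq:time reversed GOU}.

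\textbf{Step 1: $(T,\widetilde L,\widetilde\eta)$ is a Lévy process on $[0,t]$.} First I recall that $(U,L,\eta)$ is a (trivariate) Lévy process: $\eta$ is built linearly from $(U,L)$ via \eqref{eq:etaviaUL}, so the triple $(U,L,\eta)$ has jointly stationary independent increments. Hence by the standard time-reversal fact for Lévy processes (\cite[Lem.~II.2]{Bertoin1996}) the time-reversal $(\widetilde U,\widetilde L,\widetilde\eta)$ on $[0,t]$ is again a Lévy process, with $(\widetilde U_s,\widetilde L_s,\widetilde\eta_s)_{s\in[0,t]}\overset{d}{=}(-U_s,-L_s,-\eta_s)_{s\in[0,t]}$. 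Note $\Delta\widetilde U_s=-\Delta U_{(t-s)}\neq 1$, since $\Delta U_r\neq -1$ by \eqref{eq:assU(A)}, so the sum defining $T$ is well-posed and has no term with denominator zero. Now $T$ is obtained from $\widetilde U$ by precisely the transformation of Lemma~\ref{lemma:BLMLemma3.4} applied to the Lévy process $\widetilde U$ (which has jumps $\neq -1$ only if $\widetilde U$ has jumps $\neq +1$ — here I should double check the sign convention: $\widetilde U$ has jumps of size $\neq 1$, and Lemma~\ref{lemma:BLMLemma3.4} wants jumps $\neq -1$; applying it to $-\widetilde U$, or re-deriving, gives $T$ as a Lévy process with $\cE(T)_s=\cE(\widetilde U)_s^{-1}$). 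The key relation I will record here is
\begin{equation*}
	\cE(T)_s = \cE(\widetilde U)_s^{-1}, \qquad s\in[0,t],
\end{equation*}
and that $(T,\widetilde L,\widetilde\eta)$ is jointly a Lévy process since each component is a measurable (pathwise) functional of the Lévy process $(\widetilde U,\widetilde L,\widetilde\eta)$ preserving the independent-increments structure (a deterministic drift plus the quadratic-jump correction involves only $\widetilde U$).

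\textbf{Step 2: the flow identity \eqref{eq-flow-0}.} Starting from the explicit representation \eqref{eq:GOU-explicit},
$V_r^x=\cE(U)_r\big(x+\int_{(0,r]}\cE(U)_{u-}^{-1}\diff\eta_u\big)$, I want to express $V_{(t-s)-}^x$ in terms of the time-reversed data. Write $V_r^x = \cE(U)_r\, x + \cE(U)_r\int_{(0,r]}\cE(U)_{u-}^{-1}\diff\eta_u$. Using $\cE(U)_{t-s}=\cE(U)_t\cdot \cE(U)_{t-s}/\cE(U)_t$ and the multiplicative cocycle property of stochastic exponentials, $\cE(U)_{t-s}^{-1}\cE(U)_t$ can be rewritten, after reversing time, as $\cE(\widetilde U)_s^{-1}=\cE(T)_s$ — this is where Step~1's relation enters. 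The integral $\int_{(t-s,t]}\cE(U)_{u-}^{-1}\diff\eta_u$, divided by $\cE(U)_t$ and re-expressed by substituting $u\mapsto t-v$, becomes $\int_{(0,s]}\cE(T)_{v-}^{-1}\diff\widetilde L_v$; the crucial point — and the reason $\widetilde L$ and not $\widetilde\eta$ appears — is exactly the distributional identity of Lemma~\ref{lemma:BLMLemma3.1}, which in the proof of Theorem~\ref{thm:dualGOU} was shown to transform $\int\cE(W)_{s-}\diff\eta_s$ into $\cE(W)_s\int\cE(W)_{s-}^{-1}\diff L_s$; here the same algebraic identity (not merely in distribution but pathwise, since we are just rearranging the explicit Dol\'eans–Dade products and the jump-correction terms cancel as in the $Z_t=L_t$ computation in that proof) produces $\widetilde L$. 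Carrying out this rearrangement carefully and collecting terms yields \eqref{eq-flow-0}; the algebra mirrors the $Z_t=L_t$ cancellation already done in the proof of Theorem~\ref{thm:dualGOU}, so I would cite that computation rather than repeat it.

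\textbf{Step 3: $R^y$ solves \eqref{eq:time reversed GOU}.} Since $(T,\widetilde L,\widetilde\eta)$ is a Lévy process on $[0,t]$ with $\Delta T_s=-\Delta\widetilde U_s/(1-\Delta\widetilde U_s)\neq -1$, Assumption~\eqref{eq:assU(A)} holds for the pair driving the SDE \eqref{eq:time reversed GOU} once we check that $\widetilde\eta$ is the correctly-transformed noise. Concretely, \eqref{eq:time reversed GOU} is an SDE of the form \eqref{eq:GOUSDE} with driver $(T_s,\text{(something)}_s)$; by the existence/uniqueness Lemma (\cite[Prop.~3.2]{BLM2011}) its unique solution is $\cE(T)_s\big(y+\int_{(0,s]}\cE(T)_{u-}^{-1}\diff\zeta_u\big)$ where $\zeta$ is related to $(T,\widetilde\eta)$ through \eqref{eq:etaviaUL}. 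I will verify that applying \eqref{eq:etaviaUL} to the pair $(T,\widetilde\eta)$ gives exactly $\zeta=\widetilde L$, by the same cancellation computation as in Step~2 (again $\Delta T_s\,\Delta\widetilde\eta_s/(1+\Delta T_s)$ combines with $\Delta\widetilde\eta_s$ and the Gaussian covariance term to reconstruct $\Delta\widetilde L_s$ and $\sigma$-terms, precisely as in the $Z_t=L_t$ step). Hence the unique solution of \eqref{eq:time reversed GOU} equals \eqref{eq-flow-1}. That this $(R_s^y)$ is genuinely the process induced by the \emph{inverse} stochastic flow of \eqref{eq-flow-0} follows because, for fixed $s$, the map $x\mapsto V_{(t-s)-}^x$ is the affine bijection $x\mapsto \cE(T)_s x + (\text{const})$, and \eqref{eq-flow-1} is literally its inverse viewed as a flow in the forward variable $s$ — so $R_s^{\,V_{(t-s)-}^x}$ recovers, reading the cocycle backwards, the appropriate value; I would phrase this as: the flow generated by \eqref{eq-flow-0} and the flow generated by \eqref{eq:time reversed GOU} are mutually inverse affine maps because their linear parts $\cE(T)_s$ and translation parts are dictated by the same Lévy data.

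\textbf{Main obstacle.} The only genuinely delicate point is Step~2: justifying that the rearrangement of $\int_{(t-s,t]}\cE(U)_{u-}^{-1}\diff\eta_u$ under time-reversal produces $\int_{(0,s]}\cE(T)_{u-}^{-1}\diff\widetilde L_u$ \emph{pathwise} (and not just in law). One must be careful about the left-limit conventions in \eqref{eq:deftimereverse} at the reversal time, about the sign and index shift in the jump-correction sum defining $T$ versus $W$, and about the fact that $\widetilde\eta$ is the reversal of $\eta$ while the integrand after reversal pairs against $\widetilde L$; the cancellation of the cross-jump and Gaussian-covariance terms is the same one verified in the proof of Theorem~\ref{thm:dualGOU} (the line showing $Z_t=L_t$), and I would lean on that computation explicitly. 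Everything else is bookkeeping with the Dol\'eans–Dade formula \eqref{Dolean} and the cocycle property of $\cE(U)$.
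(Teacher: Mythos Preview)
Your overall three-step structure matches the paper's, and Step~3 is essentially correct: the paper also verifies that applying \eqref{eq:etaviaUL} to the pair $(T,\widetilde\eta)$ returns $\widetilde L$, so \eqref{eq-flow-1} solves \eqref{eq:time reversed GOU}. There is, however, a sign slip in Step~1: the correct relation is $\cE(T)_s=\cE(-\widetilde U)_s^{-1}$, not $\cE(\widetilde U)_s^{-1}$ (apply Lemma~\ref{lemma:BLMLemma3.4} to $-\widetilde U$, whose jumps are $\neq -1$); equivalently the paper shows $\cE(T)_s=\cE(U)_{(t-s)-}/\cE(U)_t$ directly from \eqref{Dolean}.

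The real gap is in Step~2. You treat the appearance of $\widetilde L$ (rather than $\widetilde\eta$) as coming from the same ``$Z_t=L_t$'' cancellation used in the proof of Theorem~\ref{thm:dualGOU}, upgraded from distribution to pathwise ``since we are just rearranging Dol\'eans--Dade products''. That is not what is happening. The substitution $u\mapsto t-v$ inside an It\^o integral is \emph{not} a mere change of variables: time-reversal swaps left and right endpoints of Riemann sums, so a correction term appears. The paper invokes a genuine pathwise time-reversal identity for stochastic integrals (Lemma~6.1 of \cite{LindnerMaller2005}, generalised in \cite[proof of Prop.~8.3]{BehmeLindner2012}):
\[
\cE(U')_s\int_{(0,s]}\cE(U')_{u-}^{-1}\,\diff\eta'_u \;=\; \int_{(0,s]}\cE(\widehat{U'})_{u-}\,\diff\widehat{\eta'_u}\;+\;[\cE(\widehat{U'}),\widehat{\eta'}]_s,
\]
where $(\widehat{U'},\widehat{\eta'})$ is the negative of the time-reversal of $(U',\eta')$. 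The quadratic-covariation term is essential and is exactly what you are missing. After rewriting $(\widehat{U'},\widehat{\eta'})=-(\widetilde U,\widetilde\eta)$ and using $\cE(-\widetilde U)=\cE(T)^{-1}$, the right-hand side becomes $\int_{(0,s]}\cE(T)_{u-}^{-1}\,\diff\big(\widetilde\eta_u-[\widetilde U,\widetilde\eta]_u\big)$, and then one computes directly from \eqref{eq:etaviaUL} that $\widetilde\eta_u-[\widetilde U,\widetilde\eta]_u=\widetilde L_u$. So the reason $\widetilde L$ emerges is the identity $\widetilde\eta-[\widetilde U,\widetilde\eta]=\widetilde L$, not the $\eta\leftrightarrow L$ transformation of Lemma~\ref{lemma:BLMLemma3.1}; the latter is purely distributional and cannot by itself yield the almost-sure statement \eqref{eq-flow-0}. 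You correctly flag this step as the ``main obstacle'', but the tool you propose for it is the wrong one.
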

			
	\begin{proof}
		Since $(U,L,\eta)$ is a L\'evy process, so is its time-reversal $(\widetilde{U}_s,\widetilde{L}_s,\widetilde{\eta}_s)_{s\in [0,t]}$ on $[0,t]$, hence also $(T_s,\widetilde{L}_s,\widetilde{\eta}_s)_{s\in [0,t]}$ is a L\'evy process.
		Observe that $\Delta \widetilde{U}_s = - \Delta U_{t-s}$ for $0<s<t$, and similarly for $L$ and $\eta$.
		Since $(U,L,\eta)$ almost surely does not jump at the fixed time $t$, we assume that $(\Delta U_t,\Delta L_t,\Delta \eta_t) = 0$ and hence $V_{t}^x = V_{t-}^x$ everywhere. From \eqref{eq:GOU-explicit} we obtain
		\begin{equation} \label{eq-flow-expl}
			V_t^x = \frac{\cE(U)_t}{\cE(U)_{t-s}} \left( V_{t-s}^x + \int_{(t-s,t]}
			\left(\frac{\cE(U)_{u-}}{\cE(U)_{t-s}}\right)^{-1} \diff \eta_u\right).
		\end{equation}
		Solving this for $V_{t-s}^x$ results in
		\begin{equation*}
			V_{t-s}^x = \frac{\cE(U)_{t-s}}{\cE(U)_t} \left( V_t^x - \frac{\cE(U)_t}{\cE(U)_{t-s}}
			\int_{(t-s,t]}
			\left( \frac{\cE(U)_{u-}}{\cE(U)_{t-s}} \right)^{-1} \, \di \eta_u \right)
		\end{equation*}
		and taking the c\`adl\`ag version $V_{(t-s)-}^x$ for $s\mapsto V_{t-s}^x$ yields
		\begin{equation} \label{eq-flow-rev}
			V_{(t-s)-}^x = \frac{\cE(U)_{(t-s)-}}{\cE(U)_t} \left( V_t^x - \frac{\cE(U)_t}{\cE(U)_{(t-s)}}
			\int_{[t-s,t]}
			\left( \frac{\cE(U)_{u-}}{\cE(U)_{t-s}} \right)^{-1} \, \di \eta_u \right).
		\end{equation}
		From \eqref{Dolean} we obtain
		\begin{align*}
			\frac{\cE (U)_{(t-s)-}}{\cE(U)_t}  & = \re^{U_{(t-s)-} - U_t + \sigma_U^2 s/2}
			\prod_{t-s\leq u \leq t} \left( (1+\Delta U_u) \re^{-\Delta U_u}\right)^{-1} \\
			& = \re^{\widetilde{U}_s + \sigma_U^2 s/2} \prod_{0< u \leq s} \re^{-\Delta \widetilde{U}_u}
			(1- \Delta \widetilde{U}_u)^{-1},
		\end{align*}
		where we used that $\Delta \widetilde{U}_u = - \Delta U_{t-u}$ and $\Delta U_t=0$.
		On the other hand, an easy calculation using \eqref{Dolean} shows that the last expression is equal to $\cE(T)_s$, so that
		\begin{equation} \label{eq-flow-rev2}
			\frac{\cE (U)_{(t-s)-}}{\cE(U)_t} =\cE(T)_s.
		\end{equation}
		Since the right-hand sides of both \eqref{eq-flow-0} and \eqref{eq-flow-rev} define c\`adl\`ag processes in $s$, it is enough to show that for given $s$ they agree almost surely (where the exceptional null set may depend on~$s$). Since $\eta$ almost surely does not jump at the fixed time $t-s$, the integral over the compact interval $[t-s,s]$ in \eqref{eq-flow-rev} is  almost surely equal to the corresponding integral over the half-open interval $(t-s,s]$. Hence, by \eqref{eq-flow-rev} and \eqref{eq-flow-rev2}, Equation~\eqref{eq-flow-0} will follow if we can show that,
		almost surely for each fixed $s\in [0,t]$,
		\begin{equation} \label{eq-flow-rev4}
			- \frac{\cE(U)_t}{\cE(U)_{t-s}}
			\int_{(t-s,t]}
			\left( \frac{\cE(U)_{u-}}{\cE(U)_{t-s}} \right)^{-1} \, \di \eta_u = \int_{(0,s]} \cE(T)_{u-}^{-1} \diff \widetilde{L}_u.
		\end{equation}
		For this, define for fixed $s\in [0,t]$
		$$(U_u', \eta_u') := (U_{u+t-s}-U_{t-s}, \eta_{u+t-s}-\eta_{t-s}), \quad u \in [0,s].$$
		Then $\cE(U)_{u+t-s}/\cE(U)_{t-s} = \cE(U')_u$ by \eqref{Dolean} for $u\in [0,s]$ and
		\begin{align*}
			\frac{\cE(U)_t}{\cE(U)_{t-s}} \int_{(t-s,t]}
			\left( \frac{\cE(U)_{u-}}{\cE(U)_{t-s}} \right)^{-1} \, \diff \eta_u
			& = \cE(U')_s \int_{(0,s]} \cE(U')_{u-}^{-1} \diff \eta_u' .
		\end{align*}
		Denote by $$(\widehat{U_u'}, \widehat{\eta_u'}) := (U_{s-}',\eta_{s-}') - (U_{(s-u)-}', \eta_{(s-u)-}'), \quad u \in [0,s],$$
		the negative of the time-reversal of $(U',\eta')$ at $s$. Then, with $[\cdot,\cdot]$ denoting quadratic covariation,
		$$\cE(U')_s \int_{(0,s]} \cE(U')_{u-}^{-1} \di \eta_u' = \int_{(0,s]} \cE(\widehat{U'})_{u-} \diff \widehat{\eta_u'} + [\cE(\widehat{U'}),\widehat{\eta}]_s;$$
		this is Lemma~6.1 in \cite{LindnerMaller2005} when all jumps of $U$ are greater than $-1$, and for general $U$ it follows from the proof of Proposition~8.3 in \cite{BehmeLindner2012}
		(the left-hand side is the probability limit of the $-B^\sigma$ appearing in that proof, while the right-hand side is the probability limit of $A^\sigma$). Since $\cE(\widehat{U'})_r = 1 + \int_{(0,r]} \cE(\widehat{U'})_{u-} \diff \widehat{U_u'}$ we can rewrite the left-hand side of \eqref{eq-flow-rev4} as
		\begin{align*}
			- \frac{\cE(U)_t}{\cE(U)_{t-s}}
			\int_{(t-s,t]}
			\left( \frac{\cE(U)_{u-}}{\cE(U)_{t-s}} \right)^{-1} \, \di \eta_u
			& = - \int_{(0,s]} \cE(\widehat{U'})_{u-} \diff \left( \widehat{\eta_u'} + [\widehat{U'}, \widehat{\eta'}]_u\right) \\
			& = \int_{(0,s]} \cE(-\widetilde{U})_{u-} \diff \left( \widetilde{\eta}_u - [\widetilde{U}, \widetilde{\eta}]_u\right),
		\end{align*}
		where in the last line we used that
		$$(\widehat{U_u'}, \widehat{\eta_u'}) = (U_{s-}' - U_{(s-u)-}', \eta_{s-}' - \eta_{(s-u)-}') = (U_{t-} - U_{(t-u)-}, \eta_{t-} - \eta_{(t-u)-})  = -(\widetilde{U}_u, \widetilde{\eta}_u)$$
		for $u\in [0,s]$. But $\cE(-\widetilde{U})_{u} = \cE(T)_u^{-1}$ as a consequence of \eqref{Dolean},
		\begin{equation} \label{eq-eta-calc}
			\widetilde{\eta}_u  = \widetilde{L}_u + \sum_{t-u\leq r < t} \frac{\Delta U_r \Delta L_r}{1+\Delta U_r} + \sigma_{U,L} u
			 = \widetilde{L}_u + \sum_{0<r\leq u} \frac{\Delta \widetilde{U}_r \Delta \widetilde{L}_r}{1-\Delta \widetilde{U}_r} + \sigma_{U,L} u
		\end{equation}
		by \eqref{eq:etaviaUL} and \eqref{eq:deftimereverse}, and hence
		$$\widetilde{\eta}_u - [\widetilde{U}, \widetilde{\eta}]_u = \widetilde{\eta}_u - \sigma_{U,\eta} u - \sum_{0<r\leq u} \Delta \widetilde{U}_r \Delta \widetilde{\eta}_r = \widetilde{L}_u,$$
		thus establishing \eqref{eq-flow-rev4} and hence \eqref{eq-flow-0}.
		
		For the proof of \eqref{eq:time reversed GOU}, observe that by \eqref{eq:GOU-explicit}, the unique solution to the SDE~\eqref{eq:time reversed GOU} is given by $R_s^y = \cE(T)_s (y + \int_{(0,s]} \cE(T)_{u-}^{-1} \diff N_u)$, where by \eqref{eq:etaviaUL} the process $N$ is given by
		$$N_s = \widetilde{\eta}_s  - \sum_{0<r\leq s} \frac{\Delta T_r \Delta \widetilde{\eta}_r}{1+\Delta T_r} - s \sigma_{T,\widetilde{\eta}}, \quad s\in [0,t].$$
		Inserting \eqref{eq-eta-calc} for $\widetilde{\eta}$ and observing that $\Delta T_r = \Delta \widetilde{U}_r / (1-\Delta \widetilde{U}_r)$, $1+\Delta T_r = 1/(1-\Delta \widetilde{U}_r)$ and $\Delta \widetilde{\eta}_r = \Delta \widetilde{L}_r/(1-\Delta \widetilde{U}_r)$ we see that $N=\widetilde{L}$. This shows that $(R_s^y)_{s\in [0,t]}$ is the unique solution of~\eqref{eq:time reversed GOU}.
	\end{proof}
	
	From Theorems~\ref{thm:dualGOU} and~\ref{thm:time_reversed_GOU} we deduce that, if all jumps of $U$ are greater than $-1$, then the process $(R_s^y)_{s\in [0,t]}$, $y\in \bR$, given in \eqref{eq:time reversed GOU} is Siegmund dual to $(V_s^x)_{s\in [0,t]}$, $x\in \bR$, with the obvious notion of Siegmund duality for processes defined only on $[0,t]$.
	
	\begin{corollary} \label{cor-backwards}
		Let $(V_s^x)_{s\geq 0}$, $x\in \bR$, be the  solution of the SDE \eqref{eq:GOUSDE} for the bivariate L\'evy process $(U_s,L_s)_{s\geq 0}$ with $\Delta U_s>-1$ for all $s\geq 0$. Fix $t>0$ and consider the stochastic process $(R_s^y)_{s\in [0,t]}$, $y\in \bR$, corresponding to the inverse stochastic flow defined in \eqref{eq:time reversed GOU}. Then $(R_s^y)_{s\in [0,t]}$, $y\in \bR$,  is Siegmund dual to $(V_s^x)_{s\in [0,t]}$, $x\in \bR$.
	\end{corollary}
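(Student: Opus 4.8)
The plan is to identify the inverse-flow process $(R_s^y)_{s\in[0,t]}$ of Theorem~\ref{thm:time_reversed_GOU} in law with the Siegmund dual GOU process produced in Theorem~\ref{thm:dualGOU}, and then to invoke the duality identity already established there. Siegmund duality on $[0,t]$ means $\PP(V_s^x\ge y)=\PP(R_s^y\le x)$ for all $x,y\in\bR$ and all $s\in[0,t]$. By Theorem~\ref{thm:dualGOU}, the GOU process driven by the L\'evy pair $(W,K)$ of \eqref{eq:GOUSDE2}, with $K=-\eta$, satisfies precisely $\PP(V_s^x\ge y)=\PP(R_s^y\le x)$ at \emph{every} $s\ge 0$; so it suffices to show that the process $(R_s^y)_{s\in[0,t]}$ solving \eqref{eq:time reversed GOU}, driven by $(T,\widetilde\eta)$, has the same law on $[0,t]$. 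Since a GOU process is a fixed measurable functional of its driving L\'evy pair --- e.g. by \eqref{eq-flow-1}, $R_s^y=\cE(T)_s\, y+\cE(T)_s\int_{(0,s]}\cE(T)_{u-}^{-1}\,\di\widetilde L_u$, and the dual process of Theorem~\ref{thm:dualGOU} is the same functional of $(W,-L)$ by \eqref{eq:dualGOUexplicit} --- it will be enough to prove the identity in law $(T,\widetilde\eta)\overset{d}{=}(W,K)$ (equivalently $(T,\widetilde L)\overset{d}{=}(W,-L)$) as bivariate L\'evy processes on $[0,t]$.

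To obtain this identity I would use the reversal property recalled in Section~\ref{S-timereverse}: the trivariate time-reversal $(\widetilde U,\widetilde L,\widetilde\eta)$ has the same law on $[0,t]$ as $(-U,-L,-\eta)$, so in particular $\widetilde U$ has Gaussian variance $\sigma_U^2$ and, jointly with the whole path, $\Delta\widetilde U_r$ has the law of $-\Delta U_r$. Writing $T=\phi(\widetilde U)$ with $\phi(v)_s:=v_s+\sigma_U^2 s+\sum_{0<r\le s}(\Delta v_r)^2/(1-\Delta v_r)$, the elementary jump identity $(\Delta(-U)_r)^2/(1-\Delta(-U)_r)=(\Delta U_r)^2/(1+\Delta U_r)$ together with \eqref{eq:WviaU} shows $W=\phi(-U)$; likewise, reading $\widetilde\eta$ off \eqref{eq-eta-calc} as a functional of $(\widetilde U,\widetilde L)$ and applying that same functional to $(-U,-L)$ reproduces $-\eta=K$ (using $\sigma_{U,L}$ in both and $(-\Delta U_r)(-\Delta L_r)/(1-(-\Delta U_r))=\Delta U_r\Delta L_r/(1+\Delta U_r)$). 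Hence $(T,\widetilde\eta)$ and $(W,K)$ are the \emph{same} measurable map applied to $(\widetilde U,\widetilde L)$ and to $(-U,-L)$ respectively, so $(T,\widetilde\eta)\overset{d}{=}(W,K)$ on $[0,t]$. Transferring this through the GOU functional gives that $(R_s^y)_{s\in[0,t]}$ has the same finite-dimensional distributions (jointly in $s\in[0,t]$ and $y\in\bR$) as the Theorem~\ref{thm:dualGOU} dual process restricted to $[0,t]$, and therefore $\PP(R_s^y\le x)=\PP(V_s^x\ge y)$ for all $s\in[0,t]$ and $x,y\in\bR$ (at $s=0$ both sides equal $\one_{\{x\ge y\}}$, consistent with $R_0^y=y$). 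Since $(R_s^y)_{s\in[0,t]}$ is a time-homogeneous Markov process on $[0,t]$, being a GOU process, this is Siegmund duality on $[0,t]$.

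The step I expect to need the most care is the passage from the law of the driving pair to the law of the GOU process when the driver is a \emph{time-reversed} L\'evy process: one must be sure that the stochastic integral $\int_{(0,s]}\cE(T)_{u-}^{-1}\,\di\widetilde L_u$ is determined, up to null sets, by the path of $(T,\widetilde L)$. This is justified by the facts recalled in Section~\ref{S-timereverse} --- time-reversed L\'evy processes are semimartingales in their augmented natural filtration --- together with the standard approximation of such an integral (with left-continuous adapted integrand $\cE(T)_{u-}^{-1}$) by Riemann sums converging uniformly on compacts in probability, so that the joint law of $(T,\widetilde L)$ determines that of $\big(T,\widetilde L,\int\cE(T)^{-1}\di\widetilde L\big)$ and hence of $(R_s^y)_{s\in[0,t],\,y\in\bR}$. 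Everything else is the routine jump bookkeeping verifying the two jump identities above and the matching of the Gaussian parts and covariances; neither presents a difficulty.
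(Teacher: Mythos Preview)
Your proposal is correct and follows essentially the same route as the paper: both arguments display $T$, $\widetilde\eta$, $W$, $K$ as the same measurable functional applied to $(\widetilde U,\widetilde L)$ and to $(-U,-L)$ respectively, use the time-reversal law $(\widetilde U,\widetilde L)\overset{d}{=}(-U,-L)$ to conclude $(T,\widetilde\eta)\overset{d}{=}(W,K)$, and then invoke Theorems~\ref{thm:dualGOU} and~\ref{thm:time_reversed_GOU}. Your version is more explicit about the measurability of the GOU functional in its driver (via Riemann-sum approximation of the stochastic integral), a point the paper leaves implicit.
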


	\begin{proof}
		The processes $(T,\widetilde{\eta})$ appearing in Theorem~\ref{thm:time_reversed_GOU}, and $(W,K)$ appearing in Theorem~\ref{thm:dualGOU} are given  by (compare also \eqref{eq-eta-calc} and \eqref{eq:etaviaUL})
		\begin{align*}
		T_s & = \widetilde{U}_s + \sigma_U^2 s + \sum_{0<r\le s} \frac{(\Delta \widetilde{U}_r)^2}{1-\Delta \widetilde{U}_r}, \\
		\widetilde{\eta}_s & = \widetilde{L}_s + \sigma_{U,L} s + \sum_{0<r\leq s} \frac{\Delta \widetilde{U}_r \Delta \widetilde{L}_r}{1-\Delta \widetilde{U}_r}, \\
		W_s & = - U_s + \sigma_U^2 s + \sum_{0<r\leq s} \frac{(\Delta U_r)^2}{1+\Delta U_r}, \quad \mbox{and}\\
		K_s & = - \eta_s = -L_s + \sigma_{U,L} s + \sum_{0<r\leq s} \frac{\Delta U_r \Delta L_r}{1+\Delta U_r},
		\end{align*}
		for $s\in [0,t]$. Since  the time-reversed L\'evy process $(\tilde{U}_s,\tilde{L}_s)_{s\in [0,t]}$ is equal in law to $(-U_s, -L_s)_{s\in [0,t]}$ we conclude that $(T_s,\widetilde{\eta}_s)_{s\in [0,t]}$ and $(W_s,K_s)_{s\in [0,t]}$ are equal in law. The claim follows from Theorems~\ref{thm:dualGOU} and~\ref{thm:time_reversed_GOU}.
	\end{proof}
	
	\begin{remark} 
		Corollary~\ref{cor-backwards} is not surprising, since reversibility of the stochastic flow is intrinsically related to duality, as discussed in various articles, see e.g.~\cite{Sigman2000}. Also in our case, it is possible to deduce Theorem~\ref{thm:dualGOU} directly from Theorem~\ref{thm:time_reversed_GOU}. To see this, let $(U,L)$, $V^x_s$ and $R^y_s$ be as in
		Corollary~\ref{cor-backwards}. For $u \in [0,t]$ denote by $\varphi_{u,t} : \bR \to \bR$ the stochastic flow which transports $x=V_u(\omega)$ to $V_t(\omega)$, i.e. $\varphi_{u,t}(x) = V_t(\omega)$ conditional on $V_u=x$ (we suppress the superscript~$x$ here and prefer to work with $V_t|V_0=x$ rather than $V_t^x$ for the moment). By time-homogeneity of $V$, we have for $s\in [0,t]$
		$$\PP(V_s \geq y | V_0 = x) = \PP(V_t \geq y | V_{t-s} = x) =
		\PP(\varphi_{t-s,t}(x) \geq y) = \PP(\varphi_{t-s,t}^{-1} (y) \leq x),$$
		since $\varphi_{t-s,t}$ is strictly increasing and bijective (the exact form of the flow can be read off from~\eqref{eq-flow-expl}). But $\varphi_{t-s,t}^{-1}(y) = R_{s-}^y$ by \eqref{eq-flow-0} and \eqref{eq-flow-1}, hence
		$$\PP(V_s\geq y|V_0=x) = \PP(R_{s-}^y \leq x) = \PP(R_s^y\leq x) = \PP(R_s\leq x |R_0 = y),$$ showing that $(R_s^y)_{s\in [0,t]}$, $y\in \bR$, is dual to $(V_s^x)_{s\in [0,t]}$, $x\in \bR$. By the proof of Corollary~\ref{cor-backwards}, $(T_s,\widetilde{\eta}_s)_{s\in [0,t]}$ is equal in law to $(W_s,K_s)_{s\in [0,t]}$, showing that the process given in Theorem~\ref{thm:dualGOU} (call it $\overline{R}_s^y$ for the moment) is also dual to $R$. Hence we have given another proof of the duality of $(\overline{R}_s^y)$ to $(V_s^x)$ in Theorem~\ref{thm:dualGOU}. This last proof is in line with the reasoning given in Sigman and Ryan~\cite[Cor.~3.1 (1)]{Sigman2000}, who consider $[0,\infty)$-valued time-homogeneous Markov processes with certain properties.
	\end{remark}

	\begin{remark}
		According to \cite[Prop. 4.4]{Jansen2014} the Siegmund dual GOU process derived in Theorem~\ref{thm:dualGOU} is even \emph{strongly pathwise dual}, i.e. both $V$ and $R$ can be realised on a suitable probability space such that
		$$\{ V_t^x\geq y \} = \{ R_t^y\leq x \}\quad \text{almost surely}.$$
		This follows from the construction method described in \cite{CliffordSudbury1985}, but it is not a direct consequence of our proof, as in \eqref{eq:BLMLemma3.1} we use a distributional inequality. Still, using time-reversals, one could replace \eqref{eq:BLMLemma3.1} by an almost sure statement (see \cite[Lemma 6.1]{LindnerMaller2005} for a special case), and the resulting dual in this case turns out to coincide with the inverse stochastic flow obtained in Theorem \ref{thm:time_reversed_GOU}.
	\end{remark}	

	\section*{Acknowledgements}
	The authors thank two anonymous referees whose comments and suggestions significantly helped to improve the paper's overall presentation and led to additional results such as Corollary \ref{c-subordinator-ruin}.

\bibliographystyle{plain} 
\bibliography{bib_siegmund}

\end{document}